\tikzstyle{block} = [draw,fill=blue!20,minimum size=0.5em]
\tikzstyle{branch}=[fill,shape=circle,minimum size=3pt,inner sep=0pt]
\newtheorem{thm}{Theorem}
\newtheorem{lem}[thm]{Lemma}
\newtheorem{prop}[thm]{Proposition}
\newtheorem{cor}[thm]{Corollary}
\newcommand{\ignore}[1]{}
\newcommand{\set}[1]{\left\{#1\right\}}
\newcommand{\supp}[1]{\mbox{\rm{supp}}#1}
\def\ba{\begin{array}}
\def\ea{\end{array}}
\def\beq{\begin{equation}}
\def\eeq{\end{equation}}
\def\bea{\begin{eqnarray}}
\def\eea{\end{eqnarray}}
\def\beann{\begin{eqnarray*}}
\def\eeann{\end{eqnarray*}}
\def\tn{\textnormal}
\def\A{\mathcal{A}}
\def\F{\mathcal{F}}
\def\L{\mathcal{L}}
\def\M{\mathcal{M}}
\def\O{\mathcal{O}}
\def\tn{\textnormal}
\def\es{\emptyset}
\def\sm{\setminus}
\def\het{\hat}
\def\a{\alpha}
\def\b{\beta}
\def\r{\rho}
\def\t{\theta}
\def\l{\ell}
\def\mR{\mathbb{R}}
\def\mS{\mathbb{S}}
\DeclareMathOperator{\SA}{SA}
\DeclareMathOperator{\LS}{LS}
\DeclareMathOperator{\BZ}{BZ}
\DeclareMathOperator{\Las}{Las}
\DeclareMathOperator{\BCC}{BCC}
\DeclareMathOperator{\conv}{conv}
\DeclareMathOperator{\Diag}{Diag}
\DeclareMathOperator{\FRAC}{FRAC}
\title[Elementary polytopes with high lift-and-project ranks]{Elementary polytopes with high lift-and-project ranks for strong positive semidefinite operators}
\thanks{Some of the material in this manuscript appeared in a preliminary
form in Au's PhD Thesis~\cite{Au14a}.}
\author{Yu Hin (Gary) Au}
\thanks{Yu Hin (Gary) Au: Research of this author was supported in part by a Tutte
Scholarship, a Sinclair Scholarship, an NSERC scholarship, research grants
from University of Waterloo and Discovery Grants from NSERC. Department of Mathematics, Milwaukee School of Engineering, Milwaukee, Wisconsin, U.S.A. E-mail: au@msoe.edu}
\author{Levent Tun\c{c}el}
\thanks{Levent Tun\c{c}el: Research of this author was supported in part by research grants
from University of Waterloo, Discovery Grants from NSERC and U.S. Office of Naval Research under award numbers: N00014-12-1-0049 and N00014-15-1-2171.
Department of Combinatorics and Optimization, Faculty of Mathematics, University of Waterloo, Waterloo, Ontario, N2L 3G1 Canada. E-mail: ltuncel@uwaterloo.ca}
\date{\today}
\keywords{combinatorial optimization, lift-and-project methods, integrality gap, design and analysis of algorithms
with discrete structures, integer programming, semidefinite programming, convex relaxations}
\begin{document}

\maketitle              

\begin{abstract}
We consider operators acting on convex subsets of the unit hypercube. These operators are used in constructing convex relaxations of combinatorial optimization problems presented as a 0,1 integer programming problem or a 0,1 polynomial
optimization problem.  Our focus is mostly on operators that, when expressed as a lift-and-project operator, involve the use of semidefiniteness constraints in the lifted space, including operators due to Lasserre and variants of the Sherali--Adams and Bienstock--Zuckerberg operators. We study the performance of these semidefinite-optimization-based lift-and-project
operators on some elementary polytopes --- hypercubes that are chipped (at least one vertex of the hypercube removed by intersection with a closed halfspace) or cropped (all $2^n$ vertices of the hypercube removed
by intersection with $2^n$ closed halfspaces) to varying degrees of severity $\rho$. We prove bounds on $\rho$ where these operators would perform badly on the aforementioned examples. We also show that the integrality gap of the chipped hypercube is invariant under the application of several lift-and-project operators of varying strengths.
\end{abstract}

\section{Introduction}

A foundational approach to tackling combinatorial optimization problems is to start with a 0,1 integer programming formulation 
and construct convex relaxations of the feasible
region which leads to a tractable (whether in practice or theory, of course hopefully in both)
optimization problem with essentially the same linear objective function but a convex feasible region.
Let $P \subseteq [0,1]^n$ denote  the feasible region of the linear programming relaxation of an
initial 0,1 integer programming problem.  In our convex relaxation approach, we are hoping
to construct a tractable representation of the convex hull of integer points in $P$, i.e., the  \emph{integer hull of $P$}
\[
P_I := \tn{conv}\left(P \cap \set{0,1}^n\right).
\]
However, it is impossible to efficiently find a tractable description of $P_I$ for a general $P$ (unless $\mathcal{P} = \mathcal{NP}$).  So, in many cases we may have to be content with tractable convex relaxations that are not exact (strict supersets of the integer hull of $P$).

\emph{Lift-and-project} methods provide an organized way of generating a sequence of convex relaxations of $P$
which converge to the integer hull $P_I$ of $P$ in at most $n$ rounds. Minimum number of rounds required to obtain the integer hull by a lift-and-project operator $\Gamma$ is called the \emph{$\Gamma$-rank} of $P$. Computational success of lift-and-project methods on some combinatorial optimization problems and various applications is relatively well-documented
(starting with the theoretical foundations in Balas' work in the 1970's \cite{Balas1974}; appeared as \cite{Balas98a}), and the majority of these computational successes come from lift-and-project methods which generate polyhedral relaxations. While many lift-and-project methods utilize in addition positive semidefiniteness constraints which in theory help generate tighter relaxations of $P_I$, the underlying convex optimization problems require significantly more computational resources and are prone to run into more serious numerical stability issues.  Therefore, before committing to the usage of a certain lift-and-project method, it would be wise to understand the conditions under which the usage of additional computational resources would be well justified.  Indeed, this argument applies to any collection of lift-and-project operators that trade off quality of approximation with computational resources (time, memory, etc.) required. That is, to utilize the strongest operators, one needs a better understanding of the class of problems on which these strongest operators' computational demands will be worthwhile in the returns they provide.

In the next section, we introduce a number of known lift-and-project operators and some of their basic properties, with the focus being on the following operators
(every one of these utilizes positive semidefiniteness constraints):
\begin{itemize}
\item
$\SA_+$ (see \cite{Au14a,AuT16a}), a positive semidefinite variant of the Sherali--Adams operator $\SA$ defined in~\cite{SheraliA90a};
\item
$\Las$, due to Lasserre~\cite{Lasserre01a};
\item
$\BZ_+'$ (see \cite{Au14a,AuT16a}), a strengthened version of the Bienstock--Zuckerberg operator $\BZ_+$~\cite{BienstockZ04a}.
\end{itemize}

Then, in Section 3, we look into  some elementary polytopes which represent some basic situations in 0,1 integer programs. We consider two families of polytopes: unit hypercubes that are \emph{chipped} or \emph{cropped} to various degrees of severity. First, given an integer $n \geq 1$ and a real number $\r$ where $0 \leq \r \leq n$, the \emph{chipped hypercube} is defined to be
\[
P_{n,\r} := \set{ x \in [0,1]^n : \sum_{i=1}^n x_i  \leq n- \r}.
\]
Similarly, we define the \emph{cropped hypercube}
\[
Q_{n,\r} := \set{x \in [0,1]^n : \sum_{i \in S}(1-x_i) + \sum_{i \not\in S}  x_i \geq \r,~\forall S \subseteq [n]},
\]
where $[n]$ denotes the set $\set{1,\ldots,n}$. These two families of polytopes had been shown to be bad instances for many lift-and-project methods and cutting-plane procedures (see, among others,~\cite{ChvatalCH89a, CornuejolsL01a, CookD01a, GoemansT01a, Laurent03a, Cheung07a} and more recently~\cite{KurpiszLM15a}). Moreover, these elementary sets are interesting in many other contexts as well. For instance, note that each constraint defining $Q_{n,\r}$ removes a specific extreme point of the unit hypercube from the feasible region.  In many 0,1 integer programming problems and in 0,1 mixed integer programming problems, such \emph{exclusion} constraints are relatively commonly used.

Herein, we show that these sets are also bad instances for the strongest known operators, extending the previously known results in this vein. In particular, we show the following:
\begin{itemize}
\item
The $\SA_+$-rank of $P_{n,\r}$ is $n$ for all $\r \in (0,1)$, and is at most $n - \lceil \r \rceil + 1$ for all $\r \in (0,n)$. In contrast, we show that $\tilde{\LS}$ (a simple polyhedral operator defined in~\cite{GoemansT01a} that is similar to the $\LS_0$ operator due to Lov\'asz and Schrijver~\cite{LovaszS91a}) requires $n$ iterations to return the integer hull of $P_{n,\r}$ for all non-integer $\r \in (0,n-1)$.
\item
The integrality gap of $\SA_+^k \left( P_{n,\r} \right)$ in the direction of the all-ones vector is
\[
1 + \frac{(n-k)(1-\r)}{(n-1)(n-k+k\r)}
\]
for all $n \geq 2, k \in \set{0,1,\ldots, n}$, and $\r \in (0,1)$. Moreover, we show that this integrality gap is exactly the same, if we replace $\SA_+$ by an operator as weak as $\tilde{\LS}$.
\item
The $\Las$-rank of $P_{n,\r}$ is $n$ for all $\r \in \left( 0 , \frac{n^2-1}{2n^{n+1} - n^2-1 }  \right]$. This strengthens earlier work by Cheung~\cite{Cheung07a}, who showed the existence of such a positive $\r$ but did not give concrete bounds.
\item
The $\Las$-rank of $Q_{n,\r}$ is $n$ for all $\r \in \left( 0, \frac{n+1}{2^{n+2} - n-3}  \right)$, and at most $n-1$ for all $\r > \frac{n}{2^{n+1}-2}$.
\item
There exist $n, \r$ where the $\BZ_+'$-rank of $P_{n,\r}$ is $\Omega(\sqrt{n})$, providing what we believe to be the first example where $\BZ_+'$ (and as a consequence, the weaker $\BZ_+$) requires more than a constant number of iterations to return the integer hull of a set.
\end{itemize}

The tools we use in our analysis, which involve zeta and moment matrices, build on earlier work by others (such as~\cite{Laurent03a} and~\cite{Cheung07a}), and could be useful in analyzing lift-and-project relaxations of other sets. Finally, we conclude the manuscript by noting some interesting behaviour of the integrality gaps of some lift-and-project relaxations.

We remark that preliminary and weaker versions of our results on the Lasserre relaxations of $P_{n,\r}$ and $Q_{n,\r}$ were published in the first author's PhD thesis~\cite{Au14a}. During the writing of this manuscript, we discovered that Kurpisz, Lepp{\"a}nen and Mastrolilli~\cite{KurpiszLM15a} had obtained similar and stronger results. In fact, in their work, they characterized general conditions for when the $(n-1)^{\tn{th}}$ Lasserre relaxation is not the integer hull. Using very similar ideas to theirs, we have subsequently sharpened our results to those appearing in this manuscript.

\section{Preliminaries}

In this section, we establish some notation and describe several lift-and-project operators utilizing positive semidefiniteness constraints.

\subsection{The operators $\LS_+$ and $\SA_+$}

First, let $\F$ denote $\set{0,1}^n$, and define $\mathcal{A} := 2^{\F}$, the power set of $\F$. As shown in~\cite{Zuckerberg03a}, many existing lift-and-project operators can be seen as lifting a given relaxation $P$ to a set of matrices whose rows and columns are indexed by sets in $\A$. For more motivation and details on this framework, the reader may refer to~\cite{AuT16a}.

We first define the operator $\SA_+$, which can be interpreted as a strengthened variant of the Sherali--Adams operator~\cite{SheraliA90a}. Given $P \subseteq [0,1]^n$, define the cone
\[
K(P) := \set{\begin{pmatrix}\lambda \\ \lambda x \end{pmatrix} \in \mathbb{R}^{n+1}: \lambda \geq 0,~x \in P},
\]
where we shall denote the extra coordinate by $0$. Next, we introduce a family of sets in $\A$ that are used extensively by the operators we will introduce in this paper. Given a set of indices $S \subseteq [n]$ and $t \in \set{0,1}$, we define
\[
S |_t := \set{ x \in \F : x_i = t,~\forall i \in S}.
\]
Note that $\es|_0 = \es|_1 = \F$. Also, to reduce cluttering, we write $i|_t$ instead of $\set{i}|_t$. Next, given any integer $\l \in \set{0,1,\ldots,n}$, we define $\A_{\l} := \set{S|_1 \cap T|_0 : S,T \subseteq [n], S \cap T = \es, |S| + |T| \leq \l}$ and $\A_{\l}^+ := \set{S|_1 : S \subseteq [n],  |S| \leq \l}$. For instance,
\[
\A_{1} = \set{\F, 1|_1, 2|_1, \ldots, n|_1, 1|_0, 2|_0, \ldots, n|_0},
\]
while
\[
\A_{1}^+ = \set{\F, 1|_1, 2|_1, \ldots, n|_1}.
\]
Given any vector $y \in \mathbb{R}^{\A'}$ for some $\A' \subseteq \A$ which contains $\F$ and $i|_1$ for all $i \in [n]$, we let $\het{x}(y) := (y_{\F}, y_{1|_1}, \ldots, y_{n|_1})^{\top}$. Sometimes we may also alternatively index the entries of $\het{x}(y)$ as $(y_0, y_1, \ldots, y_n)^{\top}$, when we verify these vectors' membership in $K(P)$.

Finally, let $\mathbb{S}_+^{n}$ denote the set of $n$-by-$n$ real, symmetric matrices that are positive semidefinite, and let $e_i$ denote the $i^{\tn{th}}$ unit vector (of appropriate size, which will be clear from the context). Then, given any positive integer $k$, we define the operator $\SA_+^k$ as follows:

\begin{enumerate}
\item
Let $\widehat{\SA}_+^k(P)$ be the set of matrices $Y \in \mathbb{S}_+^{\A_{k}}$ which satisfy all of the following conditions:
\begin{itemize}
\item[($\SA_+ 1$)]
$Y[\F, \F] = 1$.
\item[($\SA_+ 2$)]
For every $\a \in \A_k$:
\begin{itemize}
\item[(i)]
$\het{x}(Ye_{\a}) \in K(P)$;
\item[(ii)]
$Ye_{\a} \geq 0$.
\end{itemize}
\item[($\SA_+ 3$)]
For every $S|_1 \cap T|_0 \in \A_{k-1}$,
\[
Ye_{S|_1 \cap T|_0 \cap j|_1} + Ye_{S|_1 \cap T|_0 \cap j|_0} = Ye_{S|_1 \cap T|_0}, \quad \forall j \in [n] \sm (S \cup T).
\]
\item[($\SA_+4$)]
For all $\a,\b \in \A_k$ such that $\a \cap \b = \es, Y[\a,\b] = 0$.
\item[($\SA_+5$)]
For all $\a_1,\a_2, \b_1, \b_2 \in \A_k$ such that $\a_1 \cap \b_1 = \a_2 \cap \b_2, Y[\a_1, \b_1] = Y[\a_2, \b_2]$.
\end{itemize}
\item
Define
\[
\SA_+^k(P) := \set{x \in \mathbb{R}^n: \exists Y \in \widehat{\SA}_+^k(P),  \het{x}(Ye_{\F})= \begin{pmatrix} 1 \\x \end{pmatrix} }.
\]
\end{enumerate}

The $\SA_+^{k}$ operator extends the lifted space of the original level-$k$ Sherali--Adams operator $\SA^k$ (which are matrices of dimension $(n+1) \times \Theta(n^k)$) to a set of $\Theta(n^k)$-by-$\Theta(n^k)$
 symmetric matrices, and imposes an additional positive semidefiniteness constraint. Also, $\LS_+$, the operator defined in~\cite{LovaszS91a} that utilizes positive semidefiniteness, is equivalent to $\SA_+^1$. In general, $\SA_+^k$ dominates $\LS_+^k$ (i.e., $k$ iterative applications of $\LS_+$ --- see~\cite{AuT16a} for a proof).

\subsection{The Lasserre operator}

We now turn our attention to the $\Las$ operator due to Lasserre \\ 
\cite{Lasserre01a}. While $\Las$ can be applied to semialgebraic sets, we restrict our discussion to its applications to polytopes contained in $[0,1]^n$. Gouveia, Parrilo and Thomas provided in~\cite{GouveiaPT10a} an alternative description of the $\Las$ operator, where $P_I$ is described as the variety of an ideal intersected with the solutions to a system of polynomial inequalities. Our presentation of the operator is closer to that in~\cite{Laurent03a} than to Lasserre's original description.
Given $P := \set{x \in [0,1]^n : Ax \leq b}$, and an integer $k \in [n]$,

\begin{enumerate}
\item
Let $\widehat{\Las}^k(P)$ denote the set of matrices $Y \in \mathbb{S}_+^{\A^+_{k+1}}$ that satisfy all of the following conditions:
\begin{itemize}
\item[($\Las1$)]
$Y[\F, \F] = 1$;
\item[($\Las2$)]
For every $i \in [m]$, define the matrix $Y^i \in \mathbb{S}^{\A^+_{k}}$ where
\[
Y^i[S|_1, S'|_1] := b_i Y[S|_1, S'|_1] - \sum_{j=1}^n A[i,j] Y[(S \cup \set{j})|_1, (S' \cup \set{j})|_1],
\]
and impose $Y^i \succeq 0$.
\item[($\Las3$)]
For every $\a_1,\a_2, \b_1, \b_2 \in \A^+_k$ such that $\a_1 \cap \b_1 = \a_2 \cap \b_2$, $Y[\a_1, \b_1] = Y[\a_2, \b_2]$.
\end{itemize}
\item
Define
\[
\Las^k(P) := \set{x \in \mathbb{R}^n: \exists Y \in \widehat{\Las}^k(P):  \het{x}(Ye_{\F})= \begin{pmatrix} 1 \\ x \end{pmatrix}}.
\]
\end{enumerate}

For all operators $\Gamma$ considered in this paper, and for every polytope $P \subseteq [0,1]^n$, we define
$\Gamma^0(P):=P$.

We note that, unlike the previously mentioned operators, $\Las$ requires an explicit description of $P$ in terms of valid inequalities. While it is not apparent in the above definition of the $\Las$ operator (as it only uses the variables in the form $S|_1$, instead of the broader family of $S|_1 \cap T|_0$ as in operators based on $\SA$), we show that $\Las$ does commute with all automorphisms of the unit hypercube.

\begin{prop}\label{Lassym}
Let $L : \mR^n \to \mR^n$ be an affine transformation such that $\set{L(x) : x \in [0,1]^n} = [0,1]^n$. Then, $\Las^k(L(P)) = L(\Las^k(P))$ for all polytopes
$P \subseteq [0,1]$ and for every positive integer $k$.
\end{prop}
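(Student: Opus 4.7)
The plan is to reduce to a generating set of the hypercube's affine automorphisms, and then exhibit an explicit change of basis on the Lasserre certificate. Every affine $L$ with $L([0,1]^n) = [0,1]^n$ is a composition of coordinate permutations $\sigma$ and single-coordinate complementations $\tau_i : x_i \mapsto 1 - x_i$, and since $L^{-1}$ has the same form as $L$, it suffices to prove $L(\Las^k(P)) \subseteq \Las^k(L(P))$ for each generator; the other inclusion then follows by applying the same statement to $L^{-1}$ and $L(P)$. The permutation case is immediate: given $Y \in \widehat{\Las}^k(P)$, the matrix $Y'$ obtained by relabeling indices according to $\sigma$ satisfies $(\Las1)$--$(\Las3)$ with respect to the column-permuted constraint matrix for $L(P)$, and the projection transforms correctly.

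The substantive case is $L = \tau_1$. Writing $P = \set{x : Ax \leq b}$, a direct computation shows $L(P) = \set{x : A'x \leq b'}$ with $A'[i,1] = -A[i,1]$, $A'[i,j] = A[i,j]$ for $j \neq 1$, and $b'_i = b_i - A[i,1]$; moreover, the $i$-th slack is invariant in the sense $b'_i - \sum_j A'[i,j] L(x)_j = b_i - \sum_j A[i,j] x_j$. I would introduce the invertible matrix $U \in \mR^{\A^+_{k+1} \times \A^+_{k+1}}$ encoding the expansion of each monomial $\prod_{i \in S} L(x)_i$ in the basis $\set{\prod_{i \in S'} x_i}$: $U[S|_1, S|_1] = 1$ when $1 \notin S$; and when $1 \in S$ the only nonzero entries of the $S|_1$-column are $U[(S \sm \set{1})|_1, S|_1] = 1$ and $U[S|_1, S|_1] = -1$. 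This $U$ is triangular with $\pm 1$ on the diagonal, hence invertible. For $Y \in \widehat{\Las}^k(P)$, define $Y' := U^{\top} Y U$. Then $(\Las1)$ holds because the $\F$-column of $U$ equals $e_{\F}$; positive semidefiniteness is preserved under conjugation by the invertible $U$; and both $(\Las3)$ and the projection identity $\het{x}(Y' e_{\F}) = \bigl(1, L(x)_1, \ldots, L(x)_n\bigr)^{\top}$ follow from a short case analysis on whether $1 \in S, T$, which shows $(U^{\top} Y U)[S|_1, T|_1]$ collapses, via $(\Las3)$ for $Y$, to $y'_{S \cup T}$, where $y'_R = y_R$ if $1 \notin R$ and $y'_R = y_{R \sm \set{1}} - y_R$ if $1 \in R$.

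The main obstacle is $(\Las2)$: proving that each $Y'^i \succeq 0$. My plan is to establish the matrix identity $Y'^i = U_k^{\top} Y^i U_k$, where $U_k$ denotes the restriction of $U$ to $\A^+_k$; once this is in hand, invertibility of $U_k$ transports positive semidefiniteness from $Y^i$ to $Y'^i$. Since $(\Las3)$ for $Y$ forces $Y^i[S|_1, T|_1] = b_i y_{S \cup T} - \sum_j A[i,j] y_{(S \cup T) \cup \set{j}}$, which depends on its indices only through $S \cup T$, and the analogous statement holds for $Y'^i$ with $A', b', y'$ in place of $A, b, y$, the desired matrix identity reduces to a single scalar equation relating these two linear combinations of $y$-values. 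That equation is precisely the slack-invariance identity noted above, now carried along the monomial $\prod_{i \in R} x_i$; the verification is routine once one tracks how $A', b', y'$ relate to $A, b, y$, but this is where the bookkeeping must be handled most carefully.
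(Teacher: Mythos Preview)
Your proposal is correct and follows essentially the same route as the paper: reduce to the generators of the hypercube automorphism group, handle coordinate permutations by relabeling, and handle a single coordinate flip by conjugating the certificate matrix by a change-of-basis matrix that encodes the substitution $x_1 \mapsto 1 - x_1$ on square-free monomials. The only difference is presentational: the paper routes the flip through a rectangular matrix $U^{(\ell)} \in \mR^{\A_\ell^+ \times \A_\ell}$ into the larger index set $\A_\ell$ and then extracts $Y'$ (and each $Y'^j$) as a symmetric minor, whereas you work directly with a square invertible $U$ on $\A_{k+1}^+$ and obtain the exact identity $Y'^i = U_k^\top Y^i U_k$; the relevant columns of the two constructions coincide, so the underlying computation is the same, with your version slightly more streamlined.
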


\begin{proof}
Since the automorphism group of the unit hypercube is generated by linear transformations swapping two coordinates and affine transformations flipping a coordinate, it suffices to prove that $\Las$ commutes
with each of these transformations.
First, we show that $\Las^k$ commutes with the mappings which swap two coordinates.
Without loss of generality, we may assume the coordinates are 1 and 2. Let $L_1$ denote the
linear transformation, where
\[
\left[L_1(x)\right]_i := \left\{
\begin{array}{ll}
x_2 & \tn{if $i=1$;}\\
x_1 & \tn{if $i=2$;}\\
x_i & \tn{otherwise.}
\end{array}
\right.
\]
We also define the map $\L : \A_{k+1}^+ \to \A_{k+1}^+$ where
\[
\L(S|_1) := \left\{
\begin{array}{ll}
((S \sm \set{1}) \cup \set{2} )|_1 & \tn{if $1 \in S, 2 \not\in S$;}\\
((S \sm \set{2}) \cup \set{1} )|_1 & \tn{if $2 \in S, 1 \not\in S$;}\\
S|_1 & \tn{otherwise,}
\end{array}
\right.
\]
Now suppose $x \in \Las^k(P)$, with certificate matrix $Y \in \widehat{\Las}^k(P)$. We show that $L_1(x) \in \Las^k(L_1(P))$. Define $Y' \in \mS^{\A_{k+1}^+}$ such that
\[
Y'[S|_1, T|_1] := Y[ \L(S)|_1 , \L(T)|_1 ], \,\,\,\, \textup{for all $S,T \in \A_{k+1}$.}
\]
 Then we see that $Y'$ is $Y$ with some columns and rows permuted, and thus is positive semidefinite too. Next, for each $a \in \mR^{n+1}$ such that $a_0 + \sum_{i=1}^n a_i x_i \geq 0$ is an inequality in the system describing $P$, define $a' \in \mR^{n+1}$ where
\[
a'_i :=  \left\{
\begin{array}{ll}
a_2 & \tn{if $i=1$;}\\
a_1 & \tn{if $i=2$;}\\
a_i & \tn{otherwise.}
\end{array}
\right.
\]
Then the collection of the derived inequalities $a'_0 + \sum_{i=1}^n a'_i x_i \geq 0$ describe $L(P)$. If this is the $j^{\tn{th}}$ inequality describing $L(P)$, then
\begin{eqnarray*}
&& Y'^{j}[S|_1,T|_1]\\
 &=& a'_0 Y'[S|_1, T|_1] + \sum_{i=1}^n a'_i Y'[(S \cup \set{i})|_1, (T \cup \set{i})|_1] \\
&=&  a_0 Y[ \L(S) |_1, \L(T)|_1] + a_2 Y[\L(S \cup \set{1})|_1, \L(T \cup \set{1})|_1]+ a_1 Y[\L(S \cup \set{2})|_1, \L(T \cup \set{2})|_1] \\
&&+\sum_{i=3}^n Y[\L(S \cup \set{i})|_1, \L(T \cup \set{i}))|_1] \\
&=&  a_0 Y[ \L(S) |_1, \L(T)|_1] + a_2 Y[\L(S) \cup \set{2}|_1, \L(T) \cup \set{2}|_1] + a_1 Y[\L(S) \cup \set{1}|_1, \L(T) \cup \set{1}|_1] \\
&&+ \sum_{i=3}^n Y[\L(S \cup \set{i})|_1, \L(T \cup \set{i}))|_1] \\
&=&  Y^j [ \L(S)|_1, \L(T)|_1].
\end{eqnarray*}
Thus, $Y'^j$ is also $Y^j$ with rows and columns permuted, and thus is positive semidefinite. Hence, we obtain that $\het{x}(Y' e_{\F}) = L_1(x)$ is in $\Las(L_1(P))$.

Next, consider the affine transformations flipping a coordinate (without loss of generality, the first coordinate).  So, we define $L_2 : \mR^n \to \mR^{n}$ where
\[
\left[L_2(x)\right]_i := \left\{
\begin{array}{ll}
1 - x_1 & \tn{if $i=1$;}\\
x_i & \tn{otherwise.}
\end{array}
\right.
\]
Also, for every integer $\l \geq 1$, define $U^{(\l)} \in \mR^{\A_{\l}^+ \times \A_{\l}}$ such that
\[
U^{(\l)}[S|_1, T|_1 \cap W|_0]:= \left\{
\begin{array}{ll}
 (-1)^{ |S \sm T|} & \tn{if $T \subseteq S \subseteq T \cup W$;}\\
0 & \tn{otherwise.}
\end{array}
\right.
\]
Now let $x \in \Las^k(P)$, with certificate matrix $Y \in \widehat{\Las}^k(P)$. Define $\bar{Y} \in \mS^{\A_{k+1}}$ where $\bar{Y} := (U^{(k+1)})^{\top} Y U^{(k+1)}$. This time, we let $\L : \A_{k+1}^+ \to \A_{k+1}$ denote the map where
\[
\L(S|_1) := \left\{
\begin{array}{ll}
((S \sm \set{1})|_1 \cap 1|_0 & \tn{if $1 \in S$;}\\
S|_1 & \tn{otherwise,}
\end{array}
\right.
\]
and let  $Y' \in \mS^{\A_{k+1}^+}$ such that
\[
Y'[S|_1, T|_1] := \bar{Y}[ \L(S)|_1 , \L(T)|_1 ], \,\,\,\, \tn{for all $S,T \in \A_{k+1}$.}
\]
Then we see that $Y'$ is a symmetric minor of $\bar{Y} = (U^{(k+1)})^{\top}YU^{(k+1)}$. Since $Y \succeq 0$, it follows that $Y' \succeq 0$ as well. Next, for each $a \in \mR^{n+1}$ such that $a_0 + \sum_{i=1}^n a_i x_i \geq 0$ is an inequality in the system describing $P$, define $a' \in \mR^{n+1}$ where
\[
a'_i :=  \left\{
\begin{array}{ll}
a_0 + a_1 & \tn{if $i=0$;}\\
-a_1 & \tn{if $i=1$;}\\
a_i & \tn{otherwise.}
\end{array}
\right.
\]
Then the collection of the derived inequalities $a'_0 + \sum_{i=1}^n a'_i x_i \geq 0$ describe $L(P)$. If this is the $j^{\tn{th}}$ inequality describing $L(P)$, then
\begin{eqnarray*}
&& Y'^{j}[S|_1,T|_1]\\
 &=& a'_0 Y'[S|_1, T|_1] + \sum_{i=1}^n a'_i Y'[(S \cup \set{i})|_1, (T \cup \set{i})|_1] \\
&=& (a_0 +a_1) \bar{Y}[ \L(S) |_1, \L(T)|_1] - a_1 \bar{Y}[\L(S \cup \set{1})|_1, \L(T \cup \set{1})|_1] \\
&&+  \sum_{i=2}^n \bar{Y}[\L(S \cup \set{i})|_1, \L(T \cup \set{i})|_1] \\
&=& a_0 \bar{Y}[ \L(S) |_1, \L(T)|_1] + a_1 \bar{Y}[\L(S) \cup \set{1}|_1, \L(T)\cup \set{1}|_1] \\
&&+  \sum_{i=2}^n \bar{Y}[\L(S) \cup \set{i}|_1, \L(T) \cup \set{i}|_1] \\
&=& ((U^{(k)})^{\top} Y^{j} U^{(k)})[ \L(S)|_1, \L(T)|_1].	
\end{eqnarray*}
Thus, $Y'^j$ is a symmetric minor of $(U^{(k)})^{\top} Y^{j} U^{(k)}$, and thus is positive semidefinite. Therefore, $\het{x}(Y' e_{\F}) = L_2(x)$ is in $\Las(L_2(P))$.
\end{proof}


\subsection{The Bienstock--Zuckerberg operator}

In~\cite{BienstockZ04a}, Bienstock and Zuckerberg devised a positive semidefinite lift-and-project operator (which we denote $\BZ_+$ herein) that is quite different from the previously (pre-2004) proposed operators. In particular, in its lifted space, it utilizes variables in $\A$ that are not necessarily in the form $S|_1 \cap T|_0$, in addition to a number of other ideas. One such idea is \emph{refinement}. While $\BZ_+$ is defined for any polytope contained in $[0,1]^n$, we will restrict our discussion to lower-comprehensive polytopes for simplicity's sake. Let polytope $P := \set{x \in [0,1]^n : Ax \leq b}$, where $A \in \mathbb{R}^{m \times n}$ is nonnegative and $b \in \mathbb{R}^m$ is positive (this implies that $P$ is lower-comprehensive; conversely, every $n$-dimensional lower-comprehensive polytope in $[0,1]^n$ admits such a representation). Given a vector $v$, let $\supp(v)$ denote the \emph{support} of $v$.

Next, a subset $O$ of $[n]$ is  called a \emph{$k$-small obstruction} of $P$ if there exists an inequality $a^{\top}x \leq b_i$ in the system $Ax \leq b$ where
\begin{itemize}
\item
$O \subseteq \supp(a)$;
\item
$\sum_{j \in O} a_j > b_i$; and
\item
$|O| \leq k+1$ or $|O| \geq |\supp(a)| - (k+1)$.
\end{itemize}

Observe that, given such an obstruction $O$,  the inequality $\sum_{i \in O} x_i \leq |O| - 1$ holds for every integral vector $x \in P$. Thus, if we let $\O_k$ denote the collection of all $k$-small obstructions of the system $Ax\leq b$, then the set
\[
\O_k(P) := \set{x \in P: \sum_{i \in O} x_i \leq |O|-1,~\forall O \in \O_k}
\]
is a relaxation of $P_I$ that is potentially tighter than $P$. The operator $\BZ_+$ then defines other collections of indices called walls and tiers, and uses these sets to construct the lifted space of $P$. In some rare cases though, when the system $Ax \leq b$ does not have a single $k$-small obstruction, we have the following result that relates the performance of $\SA_+$ and $\BZ_+'$ (a strengthened version of $\BZ_+$ defined in~\cite{AuT16a}):

\begin{prop}\label{noobs}
If $P = \set{ x \in [0,1]^n : Ax \leq b}$ where $Ax \leq b$ does not have a single $k$-small obstruction, then
\[
\SA_+^{2k}(P) \subseteq \BZ_+'^k(P).
\]
\end{prop}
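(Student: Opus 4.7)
The plan is to lift a certificate from $\widehat{\SA}_+^{2k}(P)$ to one in $\widehat{\BZ}_+'^k(P)$ by aggregation. Given $x \in \SA_+^{2k}(P)$ with certificate matrix $Y \in \widehat{\SA}_+^{2k}(P)$, I would build $Y' \in \widehat{\BZ}_+'^k(P)$ so that $\hat{x}(Y' e_{\F}) = (1,x)$, establishing $x \in \BZ_+'^k(P)$.

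The key structural input is a decomposition claim about the index family $\A_{BZ} \subseteq \A$ used by $\BZ_+'^k$ in its lifted space. Unpacking the tier/wall/refinement construction of $\BZ_+'^k$ from \cite{AuT16a}, each such index set is built from obstructions together with a bounded-size collection of coordinate fixings. Since $Ax \leq b$ has no $k$-small obstruction, the only contributions to an $\A_{BZ}$-element come from coordinate fixings of size at most $k$, and after refinement each element of $\A_{BZ}$ can be written as a disjoint union $\a = \bigsqcup_{i} \gamma_i$ with $\gamma_i \in \A_{2k}$ (i.e., each $\gamma_i$ of the form $S|_1 \cap T|_0$ with $|S|+|T| \leq 2k$). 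Verifying this decomposition is the first technical step.

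Next, I would define the $0/1$ aggregation matrix $M \in \mR^{\A_{2k} \times \A_{BZ}}$ by $M[\gamma,\a]=1$ iff $\gamma$ appears in the decomposition of $\a$, and set $Y' := M^{\top} Y M$. Positive semidefiniteness of $Y'$ is then automatic from $Y \succeq 0$. The axioms of $\widehat{\BZ}_+'^k(P)$ are verified column-by-column and entry-by-entry: the normalization $Y'[\F,\F]=1$ follows from $Y[\F,\F]=1$; the equality constraints $Y'[\a_1,\b_1]=Y'[\a_2,\b_2]$ whenever $\a_1 \cap \b_1 = \a_2 \cap \b_2$ reduce, via the disjoint decompositions, to applications of ($\SA_+4$) and ($\SA_+5$); each column $Y' e_{\a} = \sum_i Y e_{\gamma_i}$ inherits nonnegativity and the relation $\hat{x}(Y' e_{\a}) \in K(P)$ from ($\SA_+2$) because $K(P)$ is a convex cone closed under nonnegative combinations; and any principal-submatrix PSD constraint imposed by $\BZ_+'^k$ takes the form $M_0^{\top} Y M_0 \succeq 0$ for a column-restriction $M_0$ of $M$, hence is inherited from $Y \succeq 0$. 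The obstruction-based inequalities of $\BZ_+'^k$ are vacuous by hypothesis.

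The main obstacle is the refinement step of $\BZ_+'$: one must check that every refined column or PSD block required by the operator still corresponds, after our aggregation, to a legitimate combination of $\SA_+^{2k}$-atoms. The decomposition bound $|S|+|T|\leq 2k$ is exactly tight here; this is why the exponent on $\SA_+$ is $2k$ rather than $k$. Once the decomposition claim is established the verification is then a matter of bookkeeping, since all remaining properties of $Y'$ reduce by linearity to properties of $Y$ already guaranteed by ($\SA_+1$)--($\SA_+5$).
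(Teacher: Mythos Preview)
Your approach differs substantially from the paper's. The paper does not lift certificates at all: it observes that with no $k$-small obstructions one has $\O_k(P)=P$ and the walls generated by $\BZ_+'^k$ are just the singletons $\{i\}$, so every tier (a union of at most $k$ walls) has size at most $k$. This makes it vacuously true that every tier of size greater than $k$ is $P$-useless, and Proposition~4 of \cite{AuT16a} then yields $\SA_+'^k(P) \subseteq \BZ_+'^k(P)$. Combining with the general containment $\SA_+^{2k}(P) \subseteq \SA_+'^k(P)$ (this is where the factor $2$ enters) finishes the proof. In other words, the paper routes the argument through the intermediate operator $\SA_+'$ and two pre-established comparison lemmas, rather than building $Y'$ from $Y$ by hand.

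Your direct certificate-lifting is in the right spirit, but two points deserve care. First, your disjoint-union decomposition claim is stronger than what the situation actually gives: in the no-obstruction case the $\BZ_+'^k$ index set does not merely \emph{decompose} into $\A_{2k}$-atoms, it essentially \emph{collapses} to the $\SA_+'^k$ index set because the tiers already have size $\leq k$; you are doing extra work to accommodate structure that is not present. Second, the step where you reduce the consistency axiom $Y'[\a_1,\b_1]=Y'[\a_2,\b_2]$ to ($\SA_+4$)--($\SA_+5$) hides an additivity requirement: you need $\sum_{i,j} Y[\gamma_i,\delta_j]$ to depend only on $\a\cap\b$, which amounts to saying the function $(\gamma,\delta)\mapsto Y[\gamma,\delta]$ is additive over disjoint unions in each argument. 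That additivity is available, but only via iterated use of ($\SA_+3$) along single-coordinate splits, so you would need to argue that your decomposition pieces $\gamma_i$ can be reached by such splits. The paper sidesteps this bookkeeping entirely by invoking the $\SA_+'$ machinery already developed in \cite{AuT16a}.
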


\begin{proof}
If $P$ does not have a single obstruction, then $\O_k(P)= P$. Also, the collection of walls generated by $\BZ_+'^k$ consists of just the singleton sets. Thus, every tier (which is a union of up to $k$ walls) has size at most $k$. Then it is vacuously true that every tier of size greater than $k$ is $P$-useless (this concept of $P$-\emph{useless} is defined in \cite{AuT16a}), and thus by Proposition 4 in~\cite{AuT16a}, we obtain that $\SA_+'^k(P) \subseteq \BZ_+'^k(P)$. Since $\SA_+^{2k}(P) \subseteq \SA_+'^{k}(P)$ in general, our claim follows.
\end{proof}

The operator $\SA_+'$ mentioned the preceding proof is a strengthened version of $\SA_+$ (with additional constraints that are very similar to those differentiating $\BZ_+'$ from $\BZ_+$). To minimize notation and distraction, we have elected to only state elements of these operators that are crucial for the subsequent results we present. In Figure~\ref{fig0} we provide a comparison of relative strengths of all aforementioned lift-and-project operators, in addition to $\BCC$, a simple operator defined by Balas, Ceria, and Cornu{\'e}jols in~\cite{BalasCC93a}; and $\tilde{\LS}$, a geometric operator studied in~\cite{GoemansT01a} in their analysis of the Lov\'asz--Schrijver operators. Each arrow in the figure denotes ``is dominated by'', meaning that when applied to the same relaxation $P$,  the operator  at the head of an arrow would return a relaxation that is at least as tight as that obtained by applying the operator at the tail of the arrow. While the focus in this paper will be on the performance of $\SA_+, \Las$ and $\BZ_+'$, some of our results also have implications on these other operators. The reader may refer to~\cite{AuT16a} for the detailed definitions and some more intricate properties of these operators.

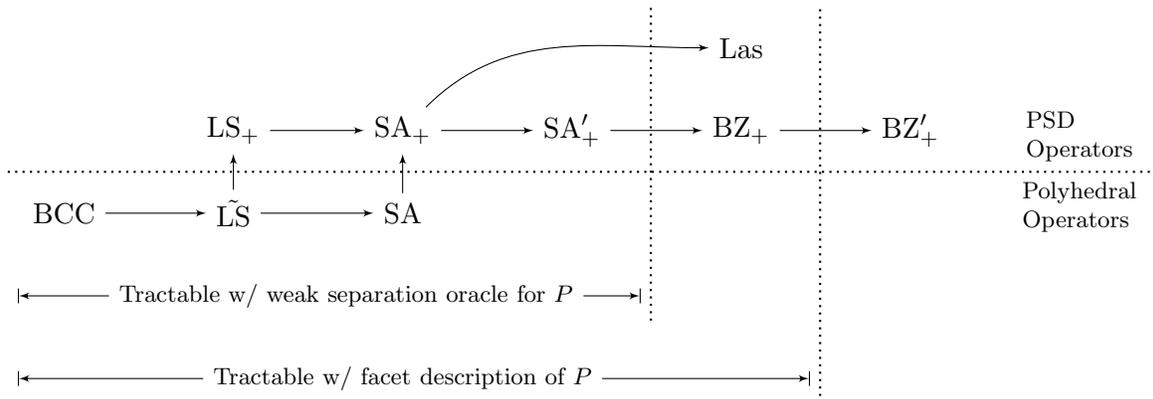
\begin{figure}[htb]
\begin{center}
\begin{tikzpicture}[y=1.1cm, x=1.5cm,>=latex', main node/.style={}, word node/.style={font=\footnotesize}]
\def\x{0}

\node at (1.5,0) (BCC) {$\BCC$};
\node at (3,0) (LStilde) {$\tilde{\LS}$};
\node at (4.5,0) (SA) {$\SA$};
\node at (3,1) (LS_+) {$\LS_+$};
\node[main node] at (4.5,1) (SA_+) {$\SA_+$};
\node[main node] at (6,1) (SA'_+) {$\SA'_+$};
\node at (7.5,1) (BZ_+) {$\BZ_+$};
\node[main node] at (9,1) (BZ'_+) {$\BZ_+'$};
\node at (7.5,2) (Las) {$\Las$};

        \draw[->] (BCC) -- (LStilde);
        \draw[->] (LStilde) -- (LS_+);
        \draw[->] (LStilde) -- (SA);

        \draw[->] (LS_+) -- (SA_+);
        \draw[->] (SA_+) -- (SA'_+);
        \draw[->] (SA'_+) -- (BZ_+);
        \draw[->] (BZ_+) -- (BZ'_+);

        \draw[->] (SA) -- (SA_+);

        \draw[->] (SA_+) to [out= 45, in = 180] (Las);

\def\y{-1} 

\draw[dotted,thick] (1,0.5) -- (11.2,0.5) ;
\draw[dotted,thick] (6.7,\y - 0.3) -- (6.7,2.5) ;
\draw[dotted,thick] (8.2,\y - 1.3) -- (8.2,2.5) ;

\node[word node, align=left, above] at (10.5,0.5)%
{PSD\\ Operators};
\node[word node, align=left, below] at (10.5,0.5)%
{Polyhedral\\ Operators};

\node[word node, align=left] at  (4,\y) (tract1)%
{Tractable w/ weak separation oracle for $P$};

\node[word node, align=left] at  (4.5, \y - 1) (tract2)%
{Tractable w/ facet description of $P$};

\node at (1,\y) (n1) {};
\node at (6.7,\y) (n2) {};
\node at (1,\y-1) (n3) {};
\node at (8.2,\y-1) (n4) {};

\path[->|]
(tract1) edge (n1)
(tract1) edge (n2)
(tract2) edge (n3)
(tract2) edge (n4);

\end{tikzpicture}
\end{center}
\caption{A strength chart of some lift-and-project operators.}\label{fig0}
\end{figure}

 There are also many other operators whose relative performance can be studied in this wider context of operators.  For example, recently Bodur, Dash and G\"{u}nl\"{u}k \cite{BodurDG2016} proposed a polyhedral lift-and-project operator called $\tilde{\textup{N}}$ and showed that
\[
\LS \rightarrow \tilde{\textup{N}} \rightarrow \SA^2
\]
where $\LS$ is a polyhedral operator devised in~\cite{LovaszS91a} that dominates $\tilde{\LS}$. 

Considering Figure~\ref{fig0}, note that every lower bound that we prove on rank as well as integrality gaps for $\Las$ and $\BZ'_+$ imply the same results for all other operators in Figure~\ref{fig0}.  Similarly, every upper bound on rank and integrality gaps for $\tilde{\LS}$ applies to all other operators in Figure~\ref{fig0}, except $\BCC$.

\section{Some bad instances for $\SA_+, \Las$ and $\BZ_+'$}

In this section, we consider several polytopes that have been shown to be bad instances for many known lift-and-project operators (and cutting plane schemes in general).

\subsection{The chipped hypercube $P_{n,\r}$}

Recall the chipped hypercube
\[
P_{n,\r} := \set{ x \in [0,1]^n : \sum_{i=1}^n x_i  \leq n- \r}.
\]
Cook and Dash~\cite{CookD01a} showed that the $\LS_+$-rank of $P_{n,1/2}$ is $n$, while Laurent~\cite{Laurent03a} proved that the $\SA$-rank of $P_{n,1/2}$ is also $n$. Cheung~\cite{Cheung07a} extended these results and showed that both the $\LS_+$- and $\SA$-rank of $P_{n,\r}$ are $n$ for all $\r \in (0,1)$. Here, we use similar techniques to establish the $\SA_+$-rank for $P_{n,\r}$. Note that, from here on, we will sometimes use $v[i]$ to denote the $i$-entry of a vector $v$ (instead of $v_i$).

\begin{prop}\label{Kndstrong}
For every $n \geq 2$, the $\SA_+$-rank of $P_{n,\r}$ is $n$ for all $\r \in (0,1)$.
\end{prop}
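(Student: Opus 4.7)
The plan is to show $\SA_+^{n-1}(P_{n,\r}) \supsetneq P_I$; combined with $\SA_+^n(P_{n,\r}) \subseteq \SA^n(P_{n,\r}) = P_I$ (since $\SA_+$ dominates $\SA$ and classical Sherali--Adams converges in $n$ rounds), this will pin the $\SA_+$-rank at exactly $n$. My target point is the symmetric vector $x^\st := (1-q)\mathbf{1}$ with $q := \r/(1+(n-1)\r)$. One-line computations give $\sum_i x^\st_i = n-nq > n-1$ (equivalent to $\r < 1$), so $x^\st \notin P_I$, while $\sum_i x^\st_i \leq n-\r$, so $x^\st \in P_{n,\r}$.

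To certify $x^\st \in \SA_+^{n-1}(P_{n,\r})$, I will construct $Y \in \widehat{\SA}_+^{n-1}(P_{n,\r})$ as the moment matrix $Y[\a,\b] := \lambda(\a \cap \b)$ of a probability measure $\lambda$ on $\F = \set{0,1}^n$ supported only on $\mathbf{1}$ and on the $n$ Hamming-weight-$(n-1)$ vectors, taking $\lambda(\mathbf{1}) = p := (1-\r)/(1+(n-1)\r)$ and $\lambda(\mathbf{1} - e_j) = q$ for each $j \in [n]$. Writing $Y = \sum_{x \in \F} \lambda(x)\, v^x (v^x)^{\top}$ with $v^x \in \mR^{\A_{n-1}}$ defined by $v^x[\a] := [x \in \a]$ makes $Y \succeq 0$ immediate as a non-negative combination of rank-one PSD matrices. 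Since $\lambda$ is an honest probability measure with $p + nq = 1$, the axioms $(\SA_+1)$, $(\SA_+2)(ii)$, $(\SA_+3)$, $(\SA_+4)$, $(\SA_+5)$ are all automatic, and the marginals $Y[i|_1,\F] = 1-q$ recover $\het{x}(Ye_\F) = (1, x^\st)^{\top}$.

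The substantive step is $(\SA_+2)(i)$, which (using $\lambda \geq 0$) reduces to $\sum_{i=1}^n \lambda(\a \cap i|_1) \leq (n-\r)\lambda(\a)$ for every $\a = S|_1 \cap T|_0 \in \A_{n-1}$. I would split by $|T|$: when $|T| \geq 2$ the support of $\lambda$ misses $\a$ and both sides vanish; when $|T| = 1$ the only atom of $\lambda$ in $\a$ is a single weight-$(n-1)$ vector, whose coordinate sum is $n-1 \leq n-\r$; when $T = \es$ and $|S| = s$, direct enumeration gives $\lambda(S|_1) = p + (n-s)q$ and $\sum_i \lambda(S|_1 \cap i|_1) = np + (n-s)(n-1)q$, so the inequality becomes $\r p \leq (n-s)(1-\r)q$, which under my choice of $(p,q)$ is $1 \leq n-s$ --- tight at $s = n-1$ and strict for every smaller $s$.

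The main obstacle is locating the right distribution. An IID Bernoulli measure violates the conditional-expectation bound already at $s=1$ as soon as its marginal exceeds $(n-1)/n$, and supporting $\lambda$ on $\mathbf{1}$ together with any other symmetry class of weight $\neq n-1$ breaks the $|T|=1$ verification. Choosing $q$ so that the binding constraint sits exactly at $s = n-1$ (making $\r p = (1-\r)q$) is the one coincidence that forces all smaller-$s$ constraints to follow automatically and that lets the argument cover the entire range $\r \in (0,1)$ uniformly.
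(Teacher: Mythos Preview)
Your proposal is correct and uses the same certificate point $x^\st = (1-q)\bar{e}$ with $q = \r/(1+(n-1)\r)$ and, in fact, exactly the same matrix $Y$ as the paper (a quick check shows $\lambda(S|_1) = p + (n-|S|)q = 1 - |S|\r/(n\r+1-\r)$, matching the paper's entries). The verification of $(\SA_+2)(i)$ by the case split on $|T|$ is identical in substance.

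The one genuine difference is in how positive semidefiniteness is established. The paper writes $Y = LY'L^{\top}$ for a certain linear map $L$ and a diagonally dominant submatrix $Y'$ indexed by $\A_{n-1}^- := \{S|_0 : |S|\le n-1\}$. Your probabilistic packaging $Y = \sum_{x\in\F}\lambda(x)\,v^x(v^x)^{\top}$ is cleaner: it makes $Y \succeq 0$ immediate and simultaneously dispatches $(\SA_+1)$, $(\SA_+2)(ii)$, $(\SA_+3)$, $(\SA_+4)$, $(\SA_+5)$ via basic additivity of the measure, whereas the paper checks these entrywise. Your framing also explains \emph{why} this particular $Y$ is the right guess (it is the moment matrix of the unique symmetric distribution on $\{\mathbf{1}\}\cup\{\mathbf{1}-e_j : j\in[n]\}$ that makes the $s=n-1$ constraint tight), something the paper's presentation leaves implicit.
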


\begin{proof}
We prove our claim by showing that $\bar{x} := \left(1 - \frac{\r}{n\r + 1 - \r} \right)\bar{e} \in \SA_+^{n-1}\left(P_{n,\r}\right) \sm \left( P_{n,\r} \right)_I$, where $\bar{e}$ denotes the all-ones vector. First,
\[
\sum_{i=1}^{n} \bar{x}_i = n\left(1 - \frac{\r}{n\r + 1 - \r} \right) = n - \frac{n\r}{n\r + 1-\r} > n- 1,
\]
and so $\bar{x} \not\in \left( P_{n,\r} \right)_I$. We next show that this vector is in $\SA_+^{n-1}\left(P_{n,\r}\right)$. Define $Y \in \mR^{\A_{n-1} \times \A_{n-1}}$ such that
\[
Y[\a, \b] := \left\{
\begin{array}{ll}
1 - \frac{\r |S|}{n\r + 1 - \r} & \tn{if $\a \cap \b = S|_1$ for some $S \subseteq [n]$;}\\
\frac{\r}{n\r + 1 - \r} & \tn{if $\a \cap \b = S|_1 \cap j|_0$ for some $S \subseteq [n]$ and $j \in [n] \sm S$;}\\
0 & \tn{otherwise.}
\end{array}
\right.
\]
We claim that $Y \in \widehat{\SA}_+^{n-1}\left(P_{n,\r}\right)$. First, ($\SA_+ 1$) holds as $Y[\F,\F] = 1$ (since $\F \cap \F = \es|_1$). It is not hard to see that $Y \geq 0$, as every entry in $Y$ is either $0, \frac{\r}{n\r + 1 - \r}$ or $1 -  \frac{k\r}{n\r + 1 - \r}$ for some integer $k \in \set{0,\ldots, n}$. Next, we check that $\het{x}(Y e_{\b}) \in K\left(P_{n,\r}\right)$ for all $\b \in \A_{n-1}$. Given $\b = S|_1 \cap T|_0, \het{x}(Y e_{\b})$ is the zero vector whenever $|T| \geq 2$, and is the vector $\frac{\r}{n\r + 1 - \r} (\bar{e} - e_i)$ whenever $T = \set{i}$ for some $i \in [n]$.

Finally, suppose $\b = S|_1$ for some $S \subseteq [n]$ where $|S| = k$. 
Then
\[
 \het{x}(Ye_{\b})[i] = \left\{
\begin{array}{ll}
1 - \frac{k\r}{n\r + 1-\r} & \tn{if $i = 0$ or $i \in S$;}\\
1 - \frac{(k+1)\r}{n\r + 1-\r} & \tn{if $i \in [n] \sm S$.}
\end{array}
\right.
\]
Now
\begin{eqnarray*}
\sum_{i=1}^n \het{x}(Ye_{\b})[i] &=& k \left( 1 - \frac{k\r}{n\r + 1-\r}\right) +  (n-k) \left( 1 - \frac{(k+1)\r}{n\r + 1-\r}\right)\\
&=& n\left( 1 - \frac{k\r}{n\r + 1-\r}\right) - \r\left( \frac{n-k}{n\r + 1-\r}\right) \\
& \leq & (n-\r)\left( 1 - \frac{k\r}{n\r + 1-\r}\right)\\
&=& (n-\r) \het{x}(Ye_{\b})[0].
\end{eqnarray*}
Thus, $\het{x}(Ye_{\b}) \in K(P)$ in this case as well. Next, it is not hard to see that the entries of $Y$ satisfy ($\SA_+ 3$), ($\SA_+4$) and ($\SA_+ 5$). Finally, to see that $Y \succeq 0$, let $Y'$ be the symmetric minor of $Y$ indexed by rows and columns from $\A_{n-1}^- := \set{S|_0 : S \subseteq [n], |S| \leq n-1}$. Then $Y' \succeq 0$ as it is diagonally dominant. Next, define $L \in \mR^{\A_{n-1} \times \A_{n-1}^-}$ where
\[
L[S|_1 \cap T|_0, U|_0] := \left\{
\begin{array}{ll}
(-1)^{|S|}& \tn{if $S \cup T = U$;}\\
0 & \tn{otherwise.}
\end{array}
\right.
\]
Then it can be checked that $Y = LY'L^{\top}$. Hence, we conclude that $Y \succeq 0$ as well. This completes our proof.
\end{proof}

We next show that $(0,1)$ is the only range of $\r$'s for which the $\SA_+$-rank of $P_{n,\r}$ is $n$. To do that, it is helpful to introduce the notion of \emph{moment matrices}. Given an integer $k \geq 0$ and vector $y \in \mR^{\A_{\l}^+}$ where $\l \geq \min\set{n, 2k}$, we define the matrix $\M_{k}(y) \in \mR^{\A_{k}^+ \times \A_{k}^+}$ where $\M_{k}(y)[\a,\b] := y[\a \cap \b]$ for all $\a,\b \in \A_{k}^+$. Then we have the following:

\begin{prop}\label{SA+PalphaUB}
For every $n \geq 2$ and non-integer $\r \in (0,n)$, the $\SA_+$-rank of $P_{n,\r}$ is at most $n- \lceil \r \rceil + 1$.
\end{prop}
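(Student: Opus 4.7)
\emph{Plan.} I would aim to show $\SA_+^k(P_{n,\r}) \subseteq (P_{n,\r})_I = P_{n,s}$, where $s := \lceil \r \rceil$ and $k := n - s + 1$; the reverse containment follows from monotonicity of $\SA_+$ together with integrality of $P_{n,s}$. Given any $Y \in \widehat{\SA}_+^k(P_{n,\r})$, I would first argue that $y_{S|_1} = 0$ for every $S \subseteq [n]$ with $|S| = k$: if $y_{S|_1} > 0$, normalizing $\hat{x}(Ye_{S|_1}) \in K(P_{n,\r})$ via $(\SA_+ 2)$(i) produces a point in $P_{n,\r}$ with $k$ coordinates equal to $1$, forcing its coordinate sum to be at least $k = n-s+1 > n-\r$ (since $\r > s-1$), contradicting the defining inequality of $P_{n,\r}$. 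Since $Y \succeq 0$, this zero diagonal entry forces the entire $S|_1$ row and column of $Y$ to vanish; then by $(\SA_+ 5)$, the ``extended'' moment $y_{U|_1}$ equals $0$ for every $U$ with $|U| \geq k$.

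Next, by permutation-invariance of $P_{n,\r}$, of $(P_{n,\r})_I$, and of $\SA_+^k$, orbit-averaging and convexity reduce us to showing that any symmetric $\bar{x} = \alpha \bar{e} \in \SA_+^k(P_{n,\r})$ has $\alpha \leq (n-s)/n$. Symmetrizing the certificate $Y$, define $f(a) := y_{S|_1}$ for $|S|=a$; then $f(0)=1$, $f(1)=\alpha$, and $f(a)=0$ for all $a \geq k$ by the vanishing established above.

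The core step is to exploit positive semidefiniteness of the moment matrix $\M_k(y)$ (with entries $f(|S \cup T|)$, zero-extended) to conclude $\alpha \leq (n-s)/n$. The cleanest route is the \emph{symmetric truncated moment problem} on $\{0,1,\ldots,n\}$: PSD of $\M_k(y)$ combined with the atom nonnegativities $y_{S|_1 \cap T|_0} \geq 0$ from $(\SA_+ 2)$(ii) yields nonnegative weights $p_0, \ldots, p_n$ with $\sum_j p_j = 1$ and $\binom{n}{a} f(a) = \sum_{j \geq a} \binom{j}{a} p_j$ for each $0 \leq a \leq k$. Setting $a=k$ and using $f(k)=0$ forces $p_j = 0$ for all $j \geq k$, so
\[
n\alpha = \sum_{i=1}^n y_{\{i\}|_1} = \sum_{j=0}^{k-1} j \, p_j \leq k-1 = n-s,
\]
yielding $\alpha \leq (n-s)/n$, as required.

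\emph{The hardest part} is constructing the representing weights $p_j$ from the truncated PSD data. In our symmetric setting, this reduces to a Schur-complement computation on $\M_k(y)$: taking the Schur complement over the diagonal block indexed by size-$(k-1)$ subsets (which, thanks to the zero-extension of $f$, equals $f(k-1) \cdot I$ up to a combinatorial scaling) yields an explicit PSD condition on the top $\{\F\} \cup \{\{i\}|_1 : i \in [n]\}$-block involving $\alpha$, $f(2)$ and $f(k-1)$. Combined with the chain of inequalities $(n-a) f(a+1) \leq (n-\r-a) f(a)$ for $a = 0, 1, \ldots, k-1$ (obtained by applying $(\SA_+ 2)$(i) to $\alpha = S|_1$ with $|S|=a$) and with the Johnson-scheme/Bose--Mesner decomposition of the symmetric moment matrix, this PSD condition rearranges into exactly $\alpha \leq (n-s)/n$.
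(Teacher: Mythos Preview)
Your overall architecture matches the paper's: first force $y_{S|_1}=0$ for all $|S|\geq k$ (via $(\SA_+2)$(i) plus PSD), then deduce $\sum_i x_i \leq n-s$. The first step is fine and is exactly what the paper does. The symmetrization detour is unnecessary---since the target inequality $\sum_i x_i \leq n-s$ is linear, it is enough to bound $Z_1:=\sum_{|S|=1} y_{S|_1}$ for \emph{every} certificate $Y$, and the paper does this directly without averaging.

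The genuine gap is in your ``cleanest route.'' You assert that PSD of $\M_k(y)$ together with the nonnegativities $y_{S|_1\cap T|_0}\geq 0$ from $(\SA_+2)$(ii) yields a nonnegative representing measure $(p_0,\ldots,p_n)$. But $(\SA_+2)$(ii) only gives $y_{S|_1\cap T|_0}\geq 0$ for $|S|+|T|\leq 2k$, whereas the atoms you need are $u_j = y_{S|_1\cap ([n]\setminus S)|_0}$ with $|S|+|[n]\setminus S|=n$; when $2k<n$ (i.e.\ $s>n/2+1$) these are not among the entries of $Y$, so their nonnegativity is not immediate. Zero-extending $f$ does make $\M_n(y)$ block-diagonal and PSD, but $\M_n(y)=Z\Diag(u)Z^{\top}\succeq 0$ does \emph{not} by itself force $u\geq 0$, since $Z$ is not orthogonal. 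So the step ``yields nonnegative weights $p_j$'' is precisely the nontrivial content, and you have not supplied it.

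This is exactly the step the paper outsources: after noting $Z_i=0$ for $i>\ell$, it invokes Corollary~12 of \cite{AuT16a} (using that $Y$ is $\ell$-established) to conclude $Z_1\leq \ell$. That corollary is doing the work your sketch leaves open. Your ``hardest part'' paragraph, which gestures at a Schur complement over the size-$(k{-}1)$ block combined with the chain $(n-a)f(a{+}1)\leq (n-\rho-a)f(a)$ and a Bose--Mesner decomposition, is too vague to verify: the chain alone only gives $f(k)\leq \prod_{a=0}^{k-1}\frac{n-\rho-a}{n-a}$, which is vacuous once $f(k)=0$, and you have not written down the Schur-complement inequality or shown it rearranges to $\alpha\leq (n-s)/n$. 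Either cite the established result (as the paper does) or actually carry out the finite-moment argument; as written, the core implication is asserted, not proved.
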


\begin{proof}
Let $P:=P_{n,\r}$. We use the notion of $\l$-establishment (defined in ~\cite{AuT16a}) to prove this claim. First, let $\l := n - \lceil \r \rceil$. From the conditions imposed by $\SA_+$, it is easy to check that every matrix $Y \in \widehat{\SA}_+^{\l+1}(P)$ is $(\l+1)$-established (and thus $\l$-established). Also, the condition $(\SA_+ 5)$ guarantees that the symmetric minor of $Y$ with rows and columns indexed by sets in $\A_{\l}^+$ is  a moment matrix $\M_{\l}(y)$ for some vector $y$.

Now notice that for every set $S \subseteq [n]$ where $|S| = \l+1$, the incidence vector of $S$ is not in $P$ (as $\l+1 > n-\r$). Hence, the condition $\het{x}\left( Y e_{S|_1}\right) \in K(P)$ imposed by $(\SA_+ 2)$ implies that $Y[\F, S|_1] = 0$. Thus, we obtain that $Y[S|_1, S|_1] = 0$ for all $S \subseteq [n]$ of size $\l+1$, and so the diagonal entries of  the symmetric minor $Y'$ of $Y$ indexed by sets in $\A_{\l+1}^+ \sm \A_{\l}$ are all zero. For $Y$ to be positive semidefinite, $Y'$ must have all zero entries. Thus, we obtain that $y[S|_1] = 0$ for all sets $S$ where $|S| \geq \l+1$.

Next, if we define $Z_i := \sum_{S \subseteq [n], |S|=i} y[S|_1]$ for every $i \geq 0$, we obtain that $Z_{i} = 0$ for all $i > \l$. Then it follows from Corollary 12 in~\cite{AuT16a} that $Z_1 \leq \l$. Since
\[
Z_1 = \sum_{i=1}^n y[i|_1] = \sum_{i=1}^n Y[i|_1, \F],
\]
we conclude that $\sum_{i=1}^n x_i \leq \l$ is valid for $\SA_+^{\l+1}(P)$, and our claim follows.
\end{proof}

Thus, we know that the $\SA_+$-rank of $P_{n,\r}$ is exactly $n$ when $\r \in (0,1)$, and the rank is $1$ if $\r \in (n-1,n)$. When $\r \in (n-2,n-1)$, it follows from Proposition~\ref{SA+PalphaUB} that the $\SA_+$-rank is at most $2$. Since it is not hard to show that $\SA_+^1 \left( P_{n,\r} \right) \neq P_{n,n-1}$, we know in this case that the $\SA_+$-rank is exactly $2$.

Next, we show that for a weaker operator, the rank of $P_{n,\r}$ is always $n$ if it is not integral and strictly contains the unit simplex. Given integer $k \in [n]$ and $P \subseteq [0,1]^n$, consider the following operator originally defined in \cite{GoemansT01a}:
\[
\tilde{\LS}^k(P) := \bigcap_{S \subseteq [n], |S|=k} \conv \set{ x \in P: x_i \in \set{0,1}, ~\forall i \in S}.
\]
That is, $x$ is in $\tilde{\LS}^k(P)$ if and only if for every set of indices $S$ of size $k$, $x$ can be expressed as a convex combination of points in $P$ whose entries in $S$ are all integral. While $\tilde{\LS}$ produces tighter relaxations than  $\BCC$, it in turn is dominated by $\SA$ and several operators devised by Lov\'asz and Schrijver in~\cite{LovaszS91a} (see, for instance,~\cite{GoemansT01a} for a discussion on this matter). Then we have the following:

\begin{prop}\label{GammaLB}
For every integer $n \geq 2$ and for every non-integer $\r \in (0,n-1)$, the $\tilde{\LS}$-rank of $P_{n,\r}$ is $n$.
\end{prop}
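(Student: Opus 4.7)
I will prove the claim by exhibiting a symmetric point $\bar{x} := c\bar{e}$ that lies in $\tilde{\LS}^{n-1}(P_{n,\r})$ but not in $(P_{n,\r})_I$, forcing the $\tilde{\LS}$-rank to equal $n$ (since $\tilde{\LS}^{n}(P) = P_I$ always). Let $k^* := n - \lceil \r \rceil$ and $\eta := \lceil \r \rceil - \r$; the hypothesis $\r \in (0, n-1) \setminus \mZ$ gives $k^* \in \set{1, \ldots, n-1}$ and $\eta \in (0,1)$. Since $(P_{n,\r})_I = \set{x \in [0,1]^n : \sum_i x_i \leq k^*}$, any $c > k^*/n$ makes $\bar{x}$ violate this inequality, so the task reduces to producing such a $c$ together with a convex-combination certificate that $c\bar{e} \in \tilde{\LS}^{n-1}(P_{n,\r})$.

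To certify $\bar{x} \in \tilde{\LS}^{n-1}(P_{n,\r})$, I invoke the $S_n$-symmetry of $P_{n,\r}$ to reduce to the case $S = [n-1]$ (any other $(n-1)$-subset is handled by a coordinate permutation that fixes $\bar{x}$). The points of $P_{n,\r}$ that are integral on $[n-1]$ have the form $v^J(t) := \sum_{i \in J} e_i + t \, e_n$ with $J \subseteq [n-1]$, $|J| \leq k^*$, and $t \in [0, \min(1, n-\r-|J|)]$. Averaging over permutations of $[n-1]$, I seek a symmetric distribution that puts weight $\lambda_k$ on each $v^J(\mu_k)$ with $|J| = k$. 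The three conditions---total mass $1$, coordinate-$i$ mean equal to $c$ for each $i \in [n-1]$, and coordinate-$n$ mean equal to $c$---yield a small linear system in the $\lambda_k$ and $\mu_k$.

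I plan to use two complementary subconstructions. When $k^* \geq (n-1)\eta$, I restrict to support $|J| \in \set{k^*-1, k^*}$ with $\mu_{k^*-1} := 1$ and $\mu_{k^*} := \eta$ (the maximum values permitted by $P_{n,\r}$). The linear system then yields
\[
c \,=\, \frac{k^* - (k^*-1)\eta}{1 + (n-1)(1-\eta)},
\]
and nonnegativity of $\lambda_{k^*-1}, \lambda_{k^*}$ reduces precisely to the regime hypothesis $k^* \geq (n-1)\eta$. In the complementary regime $k^* \leq (n-1)\eta$, I use the single-support distribution on $|J| = k^*$ with $\mu_{k^*} := k^*/(n-1)$, giving $c = k^*/(n-1)$; feasibility $\mu_{k^*} \leq \eta$ holds precisely in this regime. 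In both cases, $c > k^*/n$ follows from $\eta > 0$ together with $k^* < n$. The principal obstacle is the case split itself---no single two-support construction works uniformly---but the two regimes overlap at $k^* = (n-1)\eta$ and exhaust all non-integer $\r \in (0, n-1)$, completing the proof.
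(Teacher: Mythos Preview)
Your proposal is correct and follows essentially the same approach as the paper: both exhibit a symmetric point $c\bar{e}$ in $\tilde{\LS}^{n-1}(P_{n,\r}) \setminus (P_{n,\r})_I$ by writing it as a convex combination of points integral on $[n-1]$ supported on $|J|\in\{k^*-1,k^*\}$ with last coordinate $1$ and $\eta$ respectively. The only cosmetic difference is that the paper avoids your explicit case split by setting $\epsilon := \min\{\lceil\r\rceil-\r,\; k^*/(n-1)\}$; when the minimum is attained at the second term, the weight on $|J|=k^*-1$ vanishes and the construction collapses to your single-support Case~2, so the two arguments coincide.
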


\begin{proof}
Let $P := P_{n,\r}$ and $\l := n - \lceil \r \rceil$ (so $P_I = P_{n,\l}$). We prove our claim by showing that $\max\set{ \bar{e}^{\top} x : x \in \tilde{\LS}^{n-1}(P)} > \l$.

First, let $S = [n-1]$, and define $\epsilon := \min \set{ \lceil \r \rceil - \r, \frac{\l}{n-1}}$. Also, given $T \subseteq S$, let $\chi_T$ denote the incidence vector of $T$ in $\set{0,1}^{n-1}$. Now consider the point
\[
\bar{x} := \left(\sum_{T \subseteq S, |T| = \l} \frac{n-\l}{\binom{n-1}{\l}(n - \epsilon(n-1))} \begin{pmatrix} \chi_T \\ \epsilon \end{pmatrix}
 \right) +\left(\sum_{T \subseteq S, |T| = \l-1} \frac{\l - \epsilon(n-1)}{\binom{n-1}{\l-1}(n - \epsilon(n-1))} \begin{pmatrix} \chi_T \\ 1 \end{pmatrix}
 \right).
\]
First, observe that $\bar{x}$ is a linear combination of the points whose entries in $S$ are integral. Also, $\begin{pmatrix} \chi_T \\ \epsilon \end{pmatrix} \in P$ for all $T$ of size $\l$ (by the choice of $\epsilon$), and $\begin{pmatrix} \chi_T \\ 1 \end{pmatrix} \in P$ for all $T$ of size $\l-1$ as well.

Furthermore, since $\epsilon(n-1) \leq \l$, the weights on these points are nonnegative, and do sum up to $1$. Thus, $\bar{x}$ is indeed a convex combination of these points. By the symmetry of $P$ and the definition of $\tilde{\LS}$, we can express $\bar{x}$ as a similar convex combination of points in $P$ for all other sets $S$ of size $n-1$. Thus, this shows that $\bar{x} \in \tilde{\LS}^{n-1}(P)$.

On the other hand, it is easy to check that $\bar{x} = \frac{\l (1-\epsilon) + \epsilon}{n(1-\epsilon) + \epsilon} \bar{e}$, and thus $\bar{e}^{\top} \bar{x} > \l$ and $\bar{x} \not\in P_I$. Hence, we deduce that $P$ has $\tilde{\LS}$-rank $n$.
\end{proof}

Thus, we see that when $\r$ is close to $n-1$, the positive semidefiniteness constraint imposed by $\SA_+$ is in fact helpful in generating the desired facet of the integer hull that can be elusive to a weaker polyhedral operator until the $n^{\tn{th}}$ iteration.

We next give a lower bound on the $\SA_+$-rank of $P_{n,\r}$ for some cases where $\r > 1$, which will be useful when we later establish a $\BZ_+'$-rank lower bound for some of these polytopes. We first need the following result. Suppose $P \subseteq [0,1]^n$. Given $x \in P$, let
\[
S(x) := \set{ i \in [n] : 0 < x_i < 1}.
\]
Also, given $x \in [0,1]^n$ and two disjoint sets of indices $I,J \subseteq [n]$, we define the vector $x^I_J \in [0,1]^n$ where
\[
x^I_J[i] := \left\{
\begin{array}{ll}
1 & \tn{if $i \in I$;}\\
0 & \tn{if $i \in J$;}\\
x[i] & \tn{otherwise.}
\end{array}
\right.
\]
In other words, $x^I_J$ is the vector obtained from $x$ by setting all entries indexed by elements in $I$ to 1, and all entries indexed by elements in $J$ to 0. Then we have the following useful property that is inherited by a wide class of lift-and-project operators.

\begin{lem}[Theorem 15 in~\cite{AuT16a}]\label{SA_+}
Let $P \subseteq [0,1]^n$ and $x \in P$. If $x^I_J \in P$ for all $I,J \subseteq S(x)$ such that $|I| + |J| \leq k$, then $x \in \SA_{+}^k(P)$.
\end{lem}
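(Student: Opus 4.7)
The plan is to construct an explicit certificate matrix $Y \in \widehat{\SA}_+^k(P)$ by turning $x$ into a product distribution on $\F$. Let $X \in \F$ be the random vector whose coordinates are mutually independent with $\Pr[X_i = 1] = x_i$, and define
\[
Y[\a, \b] := \Pr[X \in \a \cap \b] \quad \text{for all } \a, \b \in \A_k.
\]
Explicitly, if $\a = S_1|_1 \cap T_1|_0$ and $\b = S_2|_1 \cap T_2|_0$, then $Y[\a,\b] = 0$ when $(S_1 \cup S_2) \cap (T_1 \cup T_2) \neq \es$, and otherwise it equals $\prod_{i \in S_1 \cup S_2} x_i \prod_{j \in T_1 \cup T_2}(1-x_j)$. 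The claim will be that $Y \in \widehat{\SA}_+^k(P)$ and that $\het{x}(Ye_{\F}) = \begin{pmatrix} 1 \\ x \end{pmatrix}$.

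Four of the five defining conditions follow essentially for free. Condition $(\SA_+1)$ amounts to $\Pr[X \in \F] = 1$; $(\SA_+4)$ and $(\SA_+5)$ are automatic because $Y[\a,\b]$ depends only on $\a \cap \b$; and $(\SA_+3)$ is the marginalization identity $\Pr[X \in \gamma \cap j|_1] + \Pr[X \in \gamma \cap j|_0] = \Pr[X \in \gamma]$, valid for any $j \not\in S \cup T$ when $\gamma = S|_1 \cap T|_0$. Positive semidefiniteness of $Y$ is equally immediate from the probabilistic viewpoint: letting $v_w \in \mR^{\A_k}$ be the indicator $v_w[\a] := \mathbf{1}[w \in \a]$, one has $Y = \sum_{w \in \F}\Pr[X = w]\, v_w v_w^{\top}$, a nonnegative combination of rank-one PSD matrices.

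The real content lies in condition $(\SA_+2)$, and this is precisely where the hypothesis on $x^I_J$ is used. Nonnegativity of $Ye_{\a}$ is clear since every entry is a probability. For the membership $\het{x}(Ye_{\a}) \in K(P)$, fix $\a = S|_1 \cap T|_0 \in \A_k$. If $\Pr[X \in \a] = 0$, then by monotonicity of probability every entry of $\het{x}(Ye_{\a})$ is also zero, and the zero vector lies trivially in $K(P)$. Otherwise $\Pr[X \in \a] > 0$ forces $x_i > 0$ for all $i \in S$ and $x_j < 1$ for all $j \in T$; setting $I := S \cap S(x)$ and $J := T \cap S(x)$ then gives $I, J \subseteq S(x)$ and $|I| + |J| \leq |S| + |T| \leq k$. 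A direct computation of conditional probabilities shows that
\[
\het{x}(Ye_{\a}) \;=\; \Pr[X \in \a] \cdot \begin{pmatrix} 1 \\ x^I_J \end{pmatrix},
\]
and the hypothesis $x^I_J \in P$ places this vector in $K(P)$. Applying this with $\a = \F$ (so $I = J = \es$ and $x^{\es}_{\es} = x$) yields $\het{x}(Ye_{\F}) = \begin{pmatrix} 1 \\ x \end{pmatrix}$, completing the verification.

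The main obstacle is choosing the right lifted object; once the product-distribution matrix $Y$ is written down, each condition reduces either to a one-line probabilistic identity or to a single application of the hypothesis to a partial assignment of size at most $k$. In particular, the quantitative bound $|I|+|J| \leq k$ in the hypothesis is exactly what is needed to conclude that the conditional distribution given any event $\a \in \A_k$ corresponds to a genuine point of $P$.
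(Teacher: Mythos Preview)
Your proof is correct. The paper does not give its own proof of this lemma but cites it as Theorem~15 of~\cite{AuT16a}; your product-distribution construction of the certificate matrix $Y$ is the standard argument and is essentially the one used in that reference.
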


Using Lemma~\ref{SA_+}, we have the following for the $\SA_+$-rank of $P_{n,\r}$:

\begin{prop}\label{Knd}
For every $n \geq 2$, if $\r \in (0,n)$ is not an integer and $k < \frac{n(\lceil \r \rceil - \r)}{ \lceil \r \rceil}$, then the $\SA_+$-rank of $P_{n,\r}$ is at least $k+1$.
\end{prop}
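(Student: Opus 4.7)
The plan is to exhibit a point in $\SA_+^k(P_{n,\r}) \setminus (P_{n,\r})_I$ by applying Lemma~\ref{SA_+} to a carefully chosen scalar multiple of the all-ones vector. Specifically, I would set $c := 1 - \frac{\r}{n-k}$ and $\bar{x} := c\bar{e}$, then check three things: that $\bar{x} \in P_{n,\r}$, that $\bar{x}$ is separated from $(P_{n,\r})_I$, and that the ``rounded'' vectors $\bar{x}^I_J$ stay inside $P_{n,\r}$ whenever $|I|+|J| \leq k$.

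First, I would verify that $c \in (0,1)$, so that $S(\bar{x}) = [n]$. The bound $c < 1$ is immediate from $\r > 0$, while rearranging the hypothesis $k < \frac{n(\lceil\r\rceil - \r)}{\lceil\r\rceil}$ yields $n - k > \frac{n\r}{\lceil\r\rceil} \geq \r$ (using $\lceil\r\rceil \leq n$, which holds since $\r < n$ is non-integer), so $c > 0$. The inclusion $\bar{x} \in P_{n,\r}$ follows from $nc = n - \frac{n\r}{n-k} \leq n - \r$. Since $\r$ is non-integer, $(P_{n,\r})_I = \{x \in \{0,1\}^n : \bar{e}^{\top}x \leq n - \lceil\r\rceil\}$, and the strict inequality $nc > n - \lceil\r\rceil$ is a direct rearrangement of the hypothesis on $k$, so $\bar{x} \notin (P_{n,\r})_I$.

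The core step is verifying that $\bar{x}^I_J \in P_{n,\r}$ for every pair of disjoint subsets $I, J \subseteq [n]$ with $|I| + |J| \leq k$. The coordinate sum of $\bar{x}^I_J$ equals $|I| + c(n - |I| - |J|)$; after substituting $c = 1 - \frac{\r}{n-k}$, the required inequality $|I| + c(n - |I| - |J|) \leq n - \r$ simplifies to $\r(|I| + |J| - k) \leq |J|(n - k)$. The left side is nonpositive by the size constraint and the right side is nonnegative, so the inequality always holds.

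Once these three facts are in hand, Lemma~\ref{SA_+} gives $\bar{x} \in \SA_+^k(P_{n,\r})$, and combining with $\bar{x} \notin (P_{n,\r})_I$ yields the $\SA_+$-rank lower bound of $k+1$. There is no substantive obstacle beyond calibrating $c$: the choice $c = 1 - \frac{\r}{n-k}$ is forced by requiring that setting $k$ coordinates of $\bar{x}$ to $1$ and leaving the remaining $n-k$ at $c$ exactly saturates $\sum_i x_i \leq n - \r$, and the hypothesis on $k$ is precisely what guarantees that this saturating value of $c$ still leaves $\bar{x}$ strictly outside $(P_{n,\r})_I$.
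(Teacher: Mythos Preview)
Your proof is correct and follows essentially the same route as the paper: both arguments pick a scalar multiple $\bar{x}=c\bar{e}$ of the all-ones vector, verify it lies outside $(P_{n,\rho})_I$, and then invoke Lemma~\ref{SA_+} by checking that every rounding $\bar{x}^I_J$ with $|I|+|J|\le k$ stays in $P_{n,\rho}$. The only difference is cosmetic: the paper chooses any $c$ strictly between $\frac{n-\lceil\rho\rceil}{n}$ and $1-\frac{\rho}{n-k}$ so that the worst-case rounding (namely $|I|=k$, $J=\emptyset$) satisfies $\sum_i x_i < n-\rho$ strictly, whereas you take the extremal value $c=1-\frac{\rho}{n-k}$, which makes that rounding tight; your direct algebraic check then handles all $(I,J)$ at once. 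One small wording issue: $(P_{n,\rho})_I$ is the \emph{convex hull} of $\{x\in\{0,1\}^n:\bar{e}^\top x\le n-\lceil\rho\rceil\}$, not the set itself, but your conclusion is unaffected since $nc>n-\lceil\rho\rceil$ already violates a valid inequality for that convex hull.
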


\begin{proof}
First, observe that
\[
k < \frac{n(\lceil \r \rceil - \r)}{ \lceil \r \rceil} \iff (n-k) \left(\frac{ n- \lceil \r \rceil}{n}\right) + k < n - \r.
\]
Thus, there exists $\l \in \mR$ such that $ (n-k) \l + k < n- \r$ and $\l > \frac{n- \lceil \r \rceil}{n}$. Consider the point $\bar{x} := \l \bar{e}$. Since $\l > \frac{n- \lceil \r \rceil}{n}$, $\bar{x} \not \in \left( P_{n,\r} \right)_I$. However, for every pair of 
disjoint sets of indices $I,J \subseteq [n]$ where $|I| + |J| \leq k$, we have
\[
\sum_{i=1}^{n} \bar{x}^I_J[i] \leq (n-k) \l + k < n- \r,
\]
by the choice of $\l$. Thus, $\bar{x}^I_J \in P_{n,\r}$ for all such choices of $I,J$. (Note that the first inequality above follows from the fact that $\sum_{i=1}^n \bar{x}^I_J[i]$ is maximized by choosing $I,J$ where $|I| =k$ and $J = \es$.) Thus, it follows from Lemma~\ref{SA_+} that $\bar{x} \in \SA_+^k\left(P_{n,\r}\right)$. This proves that $\SA_+^k\left(P_{n,\r}\right) \neq \left( P_{n,\r} \right)_I$, and hence the $\SA_+$-rank of $P_{n,\r}$ is at least $k+1$.
\end{proof}

Using Proposition~\ref{Knd}, we obtain a lower-bound result on the $\BZ_+'$-rank of $P_{n,\r}$, establishing what we believe to be the first example in which $\BZ_+'$ (and, as a result, $\BZ_+$) requires more than a constant number of iterations to return the integer hull of a set.

\begin{thm}\label{BZ_+'Palpha}
Suppose an integer $n\geq 5$ is not a perfect square. Then there exists $\r \in (\left\lfloor \sqrt{n} \right\rfloor, \lceil \sqrt{n} \rceil)$ such that the $\BZ_+'$-rank of $P_{n,\r}$ is at least $\left\lfloor \frac{ \sqrt{n}+1}{2} \right\rfloor$.
\end{thm}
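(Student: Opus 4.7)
The plan is to combine the $\SA_+$-rank lower bound from Proposition~\ref{Knd} with the containment $\SA_+^{2k}(P) \subseteq \BZ_+'^{k}(P)$ from Proposition~\ref{noobs} (applied with parameter $k-1$), for a carefully chosen $\r$ slightly larger than $\lfloor\sqrt{n}\rfloor$. Let $m := \lfloor \sqrt{n} \rfloor$ and $k := \lfloor (\sqrt{n}+1)/2 \rfloor$. When $n \in \{5,6,7,8\}$ we have $k=1$ and the conclusion is trivial; for $n \geq 10$ one checks $k \geq 2$, $k < m$, and $k + m < n$.

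I would choose $\r$ in the open interval $\bigl( m,\, (m+1)(n-2k+2)/n \bigr)$, which is nonempty since
$2(k-1)(m+1) \leq (\sqrt{n}-1)\cdot\lceil\sqrt{n}\rceil < (\sqrt{n}-1)(\sqrt{n}+1) = n-1$,
using that $n$ is not a perfect square (so $m+1 = \lceil\sqrt{n}\rceil < \sqrt{n}+1$) together with $k-1 \leq (\sqrt{n}-1)/2$. Any such $\r$ lies in $(\lfloor\sqrt{n}\rfloor,\lceil\sqrt{n}\rceil)$ with $\lceil\r\rceil = m+1$, and the defining inequality of the interval rearranges to $2k-2 < n(\lceil\r\rceil-\r)/\lceil\r\rceil$. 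Hence Proposition~\ref{Knd} gives that the $\SA_+$-rank of $P_{n,\r}$ is at least $2k-1$, so $\SA_+^{2(k-1)}(P_{n,\r}) \supsetneq (P_{n,\r})_I$.

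It then remains to verify that the defining inequality $\sum_{i=1}^n x_i \leq n - \r$ admits no $(k-1)$-small obstruction contributing a new constraint to $P_{n,\r}$. Any obstruction $O$ satisfies $|O| > n-\r$, forcing $|O| \geq n-m$; for $|O| \geq n-m+1$ the derived inequality $\sum_{i \in O} x_i \leq |O|-1$ is already implied by $\sum_{j=1}^n x_j \leq n-\r$ (since $|O|-1 \geq n-m > n-\r$), so only $|O|=n-m$ gives a non-dominated constraint, and this value satisfies neither $|O| \leq k$ nor $|O| \geq n-k$ (as $k < m$ and $k+m < n$). Hence Proposition~\ref{noobs} (with parameter $k-1$) yields $\SA_+^{2(k-1)}(P_{n,\r}) \subseteq \BZ_+'^{k-1}(P_{n,\r})$, which together with the previous paragraph shows $\BZ_+'^{k-1}(P_{n,\r}) \supsetneq (P_{n,\r})_I$, proving the $\BZ_+'$-rank is at least $k = \lfloor(\sqrt{n}+1)/2\rfloor$.

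The main subtlety is the obstruction analysis in the third paragraph: the literal definition would also label $O=[n]$ as an obstruction (since $n > n-\r$ and $n \geq n-k$), but it yields only the dominated inequality $\sum_{i=1}^n x_i \leq n-1$; the argument relies on the interpretation under which such redundant obstructions do not generate non-trivial walls in the $\BZ_+'$ construction, so that the effective obstruction is only $|O|=n-m$, which is ruled out by the size estimates $k < m$ and $k+m < n$.
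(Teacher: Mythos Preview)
Your approach is essentially the paper's: pick $\r$ just above $\lfloor\sqrt{n}\rfloor$, invoke Proposition~\ref{Knd} to get an $\SA_+$-rank lower bound of order $\sqrt{n}$, and transfer it to $\BZ_+'$ via Proposition~\ref{noobs}. Your parametrization (with $k := \lfloor(\sqrt{n}+1)/2\rfloor$ and Proposition~\ref{noobs} applied at level $k-1$) is exactly the paper's (which sets $k := \lfloor(\sqrt{n}-1)/2\rfloor$, argues at level $k$, and concludes rank $\geq k+1$); your explicit interval for $\r$ and the verification that it is nonempty are a mild sharpening of the paper's ``choose $\epsilon$ small enough''. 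The arithmetic checks ($k<m$, $k+m<n$, the rearrangement giving $2k-2 < n(\lceil\r\rceil-\r)/\lceil\r\rceil$) are all fine.

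Your last paragraph correctly surfaces a point the paper's one-line claim ``$k+1 < \r < n-(k+1)$, and so $\BZ_+'^k$ does not generate any $k$-small obstructions'' skates over. By the literal definition given in the paper, every $O \subseteq [n]$ with $|O| \geq n-k$ (your notation, parameter $k-1$) \emph{is} a $(k-1)$-small obstruction, since for such $O$ the condition $|O| > n-\r$ is automatic once $\r > k$. You rightly note that all of these yield inequalities dominated by $\sum_j x_j \leq n-\r$, so $\O_{k-1}(P)=P$; but Proposition~\ref{noobs} as stated asks for the stronger hypothesis that no obstruction exists at all (its proof uses this to conclude the walls are singletons, not merely that $\O_k(P)=P$). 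Your proposed fix --- that redundant obstructions do not produce non-trivial walls --- is plausible but not something you can read off from this paper; it requires going back to the wall/tier construction in~\cite{AuT16a}. So on this point your argument is at least as careful as the paper's, and both ultimately rest on the same unstated refinement of Proposition~\ref{noobs}.
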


\begin{proof}
Let $P := P_{n,\r}$. First, choose $\epsilon \in (0,1)$ small enough such that
\[
\sqrt{n} - 1 < \frac{ n \left(1 - \epsilon \right)}{ \lceil \sqrt{n} \rceil},
\]
and let $\r := \left\lfloor \sqrt{n} \right\rfloor + \epsilon$. Next, let $k := \left\lfloor\frac{ \sqrt{n}-1}{2} \right\rfloor$. Notice that for all $n \geq 5$, $k+1 <  \r < n-(k+1)$, and so $\BZ_+'^k$ does not generate any $k$-small obstructions for $P$. Thus, we obtain that $\SA_+^{2k}(P) \subseteq \BZ_+'^k(P)$ by Proposition~\ref{noobs}. Also, from Proposition~\ref{Knd}, since $2k \leq \sqrt{n} - 1$, $\SA_+^{2k}(P)  \neq P_I$. Thus, the $\BZ_+'$-rank of $P$ is at least $k+1 = \left\lfloor \frac{ \sqrt{n}+1}{2} \right\rfloor$.
\end{proof}

We note that the $\BZ_+$-rank of $P_{n,\r}$ is 1 for every $\r \in (0,1)$. This is because the set $[n]$ is a $k$-small obstruction for every $k \geq 1$, and so $\sum_{i=1}^n x_i \leq n-1$ is valid for $\O_k\left(P_{n,\r}\right)$, and the refinement step in $\BZ_+$ already suffices in generating the integer hull of $P_{n,\r}$. More generally, when  $k +1 \geq \r$, every subset of set of $[n]$ of size $n-k$ does qualify as a $k$-small obstruction, and it can be shown that $\BZ_+^k \left( P_{n,\r} \right) =\left( P_{n,\r} \right)_I$. On the other hand, since $\BZ_+$ (and the refined version $\BZ_+'$) dominates $\SA_+$, Proposition~\ref{SA+PalphaUB} implies that the $\BZ_+$-rank of $P_{n,\r}$ is at most $n- \lceil \r \rceil +1$. This implies that, in contrast with other operators (including $\SA_+$ and, as we will see, $\Las$), the $\BZ_+$-rank of $P_{n,\r}$ is low both when $\r$ is close to $0$ or $n$.

We next turn to the $\Las$-rank of $P_{n,\r}$. Interestingly, Cheung showed the following in~\cite{Cheung07a}:

\begin{thm}\label{CheungPalpha}
\begin{enumerate}[(i)]
\item
For every even integer $n \geq 4$, the $\Las$-rank of $P_{n,\r}$ is at most $n-1$ for all $\r \geq \frac{1}{n}$;
\item
For every integer $n \geq 2$, there exists $\r \in \left(0,\frac{1}{n}\right)$ such that the $\Las$-rank of $P_{n,\r}$ is $n$.
\end{enumerate}
\end{thm}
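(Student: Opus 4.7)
The plan is to leverage the full $S_n$ symmetry of $P_{n,\r}$ under coordinate permutations together with the moment-matrix structure of $\Las$. By Proposition~\ref{Lassym}, given any $Y \in \widehat{\Las}^{n-1}(P_{n,\r})$, averaging $Y$ under the induced $S_n$-action on $\A_n^+$ yields a feasible certificate invariant under that action; such a certificate satisfies $Y[S|_1, T|_1] = c_{|S \cup T|}$ for a single sequence $c_0 = 1, c_1, \ldots, c_n$. The certified vector equals $c_1 \bar{e}$, and the slack matrix $Y^1$ for the inequality $\sum_i x_i \leq n - \r$ has entries $(n - \r - k)c_k - (n - k)c_{k+1}$ where $k = |S \cup T|$. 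Both parts of the theorem reduce to sharp inequalities on the scalar $c_1$.

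For part (i), Proposition~\ref{SA+PalphaUB} together with the fact that $\Las$ dominates $\SA_+$ already handles $\r > 1$, so the work is confined to $\r \in [1/n, 1)$, where the desired conclusion is $c_1 \leq (n-1)/n$. The plan is to apply the Möbius change of variables $p_k := \sum_{j=0}^{n-k} (-1)^j \binom{n-k}{j} c_{k+j}$. Since $\M_n(y)$ is the full moment matrix on $\set{0,1}^n$, the condition $\M_n(y) \succeq 0$ forces $c$ to be the moment sequence of an honest non-negative measure $\mu$ on $\set{0,1}^n$; under symmetry, $p_k = \mu_k/\binom{n}{k}$ is the mass per weight-$k$ vector, and $c_1$ is the linear combination $\sum_{k=1}^n \binom{n-1}{k-1} p_k$. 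The slack matrix $Y^1$ is indexed by $\A_{n-1}^+$, which captures all polynomials of degree at most $n-1$; the resulting PSD condition limits how much mass may sit at $\bar{e}$ in terms of the remaining $p_k$'s, through a quantitative bound involving alternating binomial sums. The parity of $n$ enters via the sign $(-1)^{n-1}$ in the critical test polynomial, and when combined with $\r \geq 1/n$ forces $c_1 \leq (n-1)/n$.

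For part (ii), the plan is to construct, for $\r > 0$ small, an explicit symmetric sequence $(c_k)$ with $c_1 > (n-1)/n$ that satisfies both PSD conditions, thereby placing $c_1 \bar{e}$ in $\Las^{n-1}(P_{n,\r}) \setminus (P_{n,\r})_I$. The natural starting point is the point mass at $\bar{e}$, giving $c_k \equiv 1$, which satisfies $\M_n(y) \succeq 0$ and violates the slack PSD condition only by an $O(\r)$-small signed contribution at $\bar{e}$. One then perturbs by transferring an $O(\r)$ fraction of mass to the symmetric family of weight-$(n-1)$ strata, with coefficients chosen to restore $Y^1 \succeq 0$. Continuity of both PSD conditions in $\r$ around $0$ implies that, for $\r$ in some open interval $(0, \r_0)$, every principal minor remains non-negative, while the resulting $c_1 = 1 - O(\r)$ still exceeds $(n-1)/n$.

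The principal obstacle is the determinantal book-keeping in part (i): isolating the specific isotypic block in which positivity flips precisely at $c_1 = (n-1)/n$ and $\r = 1/n$, and verifying that the parity of $n$ is what makes the alternating sign line up correctly. A secondary technical point in part (ii) is ensuring that the one-parameter perturbation of $c_k \equiv 1$ keeps every principal minor of both $\M_n(y)$ and $Y^1$ non-negative for $\r$ sufficiently small; this should follow from the strict feasibility at $\r = 0$ together with continuity.
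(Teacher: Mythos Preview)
The paper does not supply a proof of Theorem~\ref{CheungPalpha}; it is quoted as a result of Cheung~\cite{Cheung07a}. The paper does prove a quantitative strengthening of part~(ii) as Theorem~\ref{LasPalpha}, using the product-Bernoulli moment vector $y[S|_1]=\theta^{|S|}$ with $\theta$ slightly larger than $(n-1)/n$, not a perturbation of the point mass at~$\bar e$.

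Your proposal for part~(ii) contains a genuine gap. Transferring mass only from $\bar e$ to the weight-$(n-1)$ stratum gives the symmetric sequence $c_k=1-k\epsilon$, and a computation via Lemma~\ref{zetau} shows that the resulting slack matrix is
\[
\M_{n-1}(w)\;=\;\epsilon(1-\r)\sum_{|S|=n-1} z_S z_S^{\top}\;+\;\r(n\epsilon-1)\,\bar e\,\bar e^{\top},
\]
where $z_S:=\bar Z e_{S|_1}$. For $\epsilon<\tfrac{1}{n}$ the second term is negative, and $\bar e$ does not lie in the span of $\{z_S:|S|=n-1\}$: the vector $x:=e_{\F}-\tfrac{1}{\,n-1\,}\sum_{i\in[n]}e_{i|_1}$ satisfies $z_S^{\top}x=0$ for every such $S$ while $\bar e^{\top}x=-\tfrac{1}{n-1}\neq 0$, so $x^{\top}\M_{n-1}(w)\,x<0$. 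At $\epsilon=\tfrac{1}{n}$ the matrix is PSD, but then $c_1=\tfrac{n-1}{n}$ exactly, so the construction never certifies a point outside $(P_{n,\r})_I$. Your continuity argument cannot rescue this: at $\r=0$ the slack matrix is either $0$ (pure point mass) or rank~$n$ in a $(2^n-1)$-dimensional space (after your perturbation) --- in neither case strictly feasible. What makes Theorem~\ref{LasPalpha} work is precisely that the product measure puts positive mass on \emph{every} stratum, so that the positive part $\bar Z\Diag(u')\bar Z^{\top}$ in Lemma~\ref{thetapower}(iii) has full rank and can absorb the rank-one negative correction. Your plan for part~(i) is, as you acknowledge, only a sketch; the symmetric-certificate reduction is sound, but you have not identified the vector witnessing $\M_{n-1}(w)\not\succeq 0$ when $\r\ge\tfrac{1}{n}$ and $n$ is even.
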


Thus, while the rank of $P_{n,\r}$ is invariant under the choice of $\r \in (0,1)$ with respect to all other lift-and-project operators we have considered so far, it is not the case for $\Las$. Next, we strengthen part (ii) of Cheung's result above, and give a range of $\r$ where $P_{n,\r}$ has $\Las$-rank $n$ for every $n \geq 2$.

\begin{thm}\label{LasPalpha}
Suppose $n \geq 2$, and
\[
0 <  \r \leq \frac{n^2-1}{2 n^{n+1} - n^2 -1}.
\]
Then $P_{n,\r}$ has $\Las$-rank $n$.
\end{thm}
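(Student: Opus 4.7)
The plan is to adapt the framework used by Cheung in \cite{Cheung07a} with quantitative control. Since $\left(P_{n,\r}\right)_I$ coincides with $\left(P_{n,1}\right)_I$ for every $\r \in (0,1)$, it suffices to produce, for each $\r$ in the claimed range, a point $\bar{x} \in \Las^{n-1}(P_{n,\r})$ with $\sum_{i=1}^n \bar{x}_i > n-1$. Using Proposition~\ref{Lassym} and the full coordinate-permutation symmetry of $P_{n,\r}$, I would aim for $\bar{x} := \lambda \bar{e}$ with $\lambda$ slightly above $(n-1)/n$, certified by an $S_n$-invariant matrix $Y \in \widehat{\Las}^{n-1}(P_{n,\r})$.

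Invariance forces $Y$ to have the form $Y[S|_1, T|_1] = y_{|S \cup T|}$ for a sequence $(y_0, \ldots, y_n)$ with $y_0 = 1$ and $y_1 = \lambda$, so $(\Las1)$ and $(\Las3)$ hold automatically. What remains is $Y \succeq 0$ together with the slack conditions $Y^i \succeq 0$ from $(\Las 2)$ for every inequality in the defining system of $P_{n,\r}$. A direct computation using symmetry gives, for the slack matrix of $\sum_i x_i \leq n - \r$,
\[
Y^{\tn{sum}}[S|_1, T|_1] \;=\; (n - s - \r)\, y_s \;-\; (n-s)\, y_{s+1}, \qquad s := |S \cup T|,
\]
while the slack matrices for the hypercube inequalities $x_i \leq 1$ and $x_i \geq 0$ admit equally simple closed forms (the latter being principal submatrices of $Y$).

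The key technical step is to reduce the PSD-ness of these $S_n$-invariant matrices to scalar conditions on the sequence $(y_0, \ldots, y_n)$. I would accomplish this via the standard zeta/Moebius transform on the Boolean lattice, writing $y_s = \sum_{k \geq s} \binom{n-s}{k-s} z_k$ as in~\cite{Laurent03a, Cheung07a}. In this dual basis, both $Y$ and each slack matrix block-diagonalize along the $S_n$-isotypic decomposition of $\mathbb{R}^{\A_n^+}$, so that each PSD constraint becomes the non-negativity of an explicit linear combination of the $z_k$ (and of $\r$, for the chipping slack). The constraint $y_1 > (n-1)/n$ translates into a prescribed affine constraint on these $z_k$.

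The main obstacle is the quantitative optimization: the free parameters $z_2, \ldots, z_n$ must be chosen so that every PSD condition holds simultaneously while keeping $y_1 > (n-1)/n$, and the supremum of admissible $\r$ must match the closed form $\frac{n^2-1}{2n^{n+1}-n^2-1}$. The denominator scaling as $2n^{n+1}$ suggests that the binding condition arises from a single block (likely the top-dimensional one, involving $z_n$) whose smallest eigenvalue is driven to zero; solving this boundary equation for $\r$, after fixing $(z_k)$ to be an extremal choice guided by that block, should yield the stated bound. Verifying that every other block remains PSD for this choice is expected to be routine but bookkeeping-intensive, and is where the detailed binomial-coefficient identities used in~\cite{Cheung07a} would enter.
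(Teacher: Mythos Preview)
Your high-level plan---a symmetric certificate $Y[S|_1,T|_1]=y_{|S\cup T|}$, the zeta/M\"obius change of basis, and analysis of the single nontrivial slack matrix---coincides with the paper's setup. But the technical reduction you propose has a real gap.

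The assertion that under the $S_n$-isotypic decomposition ``each PSD constraint becomes the non-negativity of an explicit linear combination of the $z_k$'' holds for the top-level matrix $Y=\M_n(y)$ (since $Z$ is invertible and $\M_n(y)=Z\Diag(z)Z^\top\succeq 0 \iff z\geq 0$), but it fails for the slack matrix $Y^{\tn{sum}}=\M_{n-1}(w)$. That matrix lives on $\A_{n-1}^+$, where the trivial representation has multiplicity $n$ and each Specht module $S^{(n-j,j)}$ with $j\geq 1$ has multiplicity $n-2j+1$; the blocks in the commutant are therefore matrices of size up to $n$, not $1\times 1$ scalars. Block-diagonalization alone does not hand you a single linear inequality in $\rho$, and the step ``solving this boundary equation for $\rho$\dots should yield the stated bound'' is where the argument is incomplete.

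The paper sidesteps this by committing at the outset to the one-parameter ansatz $y_s:=\theta^s$ (equivalently $z_k=\theta^k(1-\theta)^{n-k}$), rather than optimizing over all symmetric $(y_s)$. This choice makes $z_k>0$ automatic, and for the slack one gets $\M_n(w)=Z\Diag(u)Z^\top$ with $u[S|_1]=(n-|S|-\rho)\theta^{|S|}(1-\theta)^{n-|S|}$: positive for $|S|\leq n-1$ and equal to $-\rho\theta^n$ only at $S=[n]$. Deleting the $[n]|_1$ row and column then gives
\[
\M_{n-1}(w)=\bar Z\,\Diag(u')\,\bar Z^\top-\rho\theta^n\,\bar e\bar e^\top,
\]
a positive-definite matrix minus a rank-one term, so $\M_{n-1}(w)\succ 0$ reduces to the \emph{single} scalar inequality $\rho\theta^n\sum_{S\subsetneq [n]}1/u[S|_1]<1$. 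Bounding that sum with a binomial identity and letting $\theta\downarrow\frac{n-1}{n}$ produces the closed form $\frac{n^2-1}{2n^{n+1}-n^2-1}$. The idea missing from your proposal is thus not the representation-theoretic decomposition but the specific ansatz that collapses the slack-matrix PSD constraint to a rank-one perturbation problem.
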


Before we prove Theorem~\ref{LasPalpha}, we need some notation and lemmas. Define the matrix $Z \in \mathbb{R}^{\A_n^+ \times \A_n^+}$ where
\[
Z[S|_1, T|_1] := \left\{
\begin{array}{ll}
1 & \tn{if $S \subseteq T$;}\\
0 & \tn{otherwise.}
\end{array}
\right.
\]
$Z$ is the \emph{zeta matrix} of $[n]$. Note that $Z$ is invertible, and it is well known that its inverse is the \emph{M\"obius matrix} $M \in \mathbb{R}^{\A_n^+ \times \A_n^+}$ where
\[
M[S|_1, T|_1] := \left\{
\begin{array}{ll}
(-1)^{|T \sm S|} & \tn{if $S \subseteq T$;}\\
0 & \tn{otherwise.}
\end{array}
\right.
\]
Throughout this paper, we will assume that the rows and columns in $Z$ and $M$ are ordered such that the last row/column corresponds to the set $[n]|_1$. Note that, with such an ordering, the last column of $Z$ is the all-ones vector. The following relation between zeta matrices and moment matrices is due to Laurent~\cite{Laurent03a}:

\begin{lem}[Lemma 2 in~\cite{Laurent03a}]\label{zetau}
Suppose $y \in \mR^{\A_n^+}$. Define $u \in \mR^{\A_n^+}$ where
\[
u[S|_1] := \sum_{T \supseteq S} (-1)^{|T \sm S|} y[T|_1].
\]
Then $\M_n(y) = Z \Diag(u) Z^{\top}$.
\end{lem}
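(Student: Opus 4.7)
The plan is to verify the identity entrywise, using the fact that $Z$ and $M$ are mutually inverse (standard zeta--Möbius duality on the Boolean lattice) and the observation that $\M_n(y)$ has a very simple description on row $S|_1$ and column $T|_1$, namely $y\bigl[(S\cup T)|_1\bigr]$ (since $S|_1 \cap T|_1 = (S\cup T)|_1$).

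First I would unpack the right-hand side. For any $S,T\subseteq[n]$,
\[
(Z\,\Diag(u)\,Z^\top)[S|_1, T|_1] \;=\; \sum_{W\subseteq[n]} Z[S|_1, W|_1]\,u[W|_1]\,Z[T|_1, W|_1].
\]
The definition of $Z$ forces $Z[S|_1, W|_1]\,Z[T|_1, W|_1] = 1$ precisely when $S\subseteq W$ and $T\subseteq W$, i.e.\ when $W\supseteq S\cup T$, and it vanishes otherwise. Hence the expression collapses to
\[
(Z\,\Diag(u)\,Z^\top)[S|_1, T|_1] \;=\; \sum_{W\supseteq S\cup T} u[W|_1].
\]

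Next I would apply Möbius inversion. The defining formula $u[S|_1]=\sum_{T\supseteq S}(-1)^{|T\sm S|}y[T|_1]$ is exactly $u = My$, where $M$ is the Möbius matrix of the Boolean lattice on $[n]$. Since $M = Z^{-1}$, this is equivalent to $y = Zu$, which reads
\[
y[R|_1] \;=\; \sum_{W\supseteq R} u[W|_1] \qquad \text{for every } R\subseteq[n].
\]
Applied with $R := S\cup T$, this gives $\sum_{W\supseteq S\cup T} u[W|_1] = y\bigl[(S\cup T)|_1\bigr]$.

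Finally, I would match this against the left-hand side: by the definition of the moment matrix, $\M_n(y)[S|_1, T|_1] = y\bigl[S|_1\cap T|_1\bigr] = y\bigl[(S\cup T)|_1\bigr]$. Combining the two equalities yields $\M_n(y)[S|_1, T|_1] = (Z\,\Diag(u)\,Z^\top)[S|_1, T|_1]$ for all $S,T\subseteq[n]$, which is the desired identity. There is no real obstacle here; the only subtlety worth flagging is the sign convention in the definition of $u$, which must be the exact one that makes $u = My$ (rather than $u = M^\top y$) so that the inversion $y = Zu$ produces upper sets in the Boolean lattice, matching the support pattern of $Z$ used in the entrywise computation.
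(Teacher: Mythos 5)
Your proof is correct and is essentially the standard argument for this identity: the paper does not reprove the lemma but cites Lemma~2 of Laurent, whose proof is the same entrywise computation combined with M\"obius inversion ($u = My \iff y = Zu$) that you give. The one point you flag --- that the sign convention makes $u = My$ so that $y[R|_1] = \sum_{W \supseteq R} u[W|_1]$ --- is indeed the only place where care is needed, and you have handled it correctly.
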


Note that we used $\Diag(u)$ to denote the diagonal matrix $U$ where $U[S|_1,S|_1] := u[S|_1]$ for all $S \subseteq [n]$. Next, the following lemma will be useful for proving Theorem~\ref{LasPalpha}, as well as analyzing the cropped hypercube $Q_{n,\r}$ later on. Note that it uses very similar ideas to that in~\cite{KurpiszLM15a}, where they characterized general conditions for when $\M_{n-1}(w)$ is positive semidefinite, although the proof here is simpler as we are specifically focused on the applications to the sets $P_{n,\r}$ and $Q_{n,\r}$.

\begin{lem}\label{thetapower}
Let $\t \in (0,1)$ be a fixed number. Define $y \in \mR^{\A_{n}^+}$ such that $y[S|_1] := \t^{|S|}$ for all $S \subseteq [n]$. Then
\begin{enumerate}[(i)]
\item
$\M_i(y) \succeq 0$ for all $i \in [n]$.
\item
Given any $\r > 0$,
\[
(n-\r) \M_n(y) [S|_1,T|_1] - \sum_{i=1}^n \M_n(y) [(S \cup
\set{j})|_1, (T \cup \set{j})|_1] = \M_n(w)[S|_1,T|_1]
\]
where $w|S|_1] := \left((n-|S|)(1-\t) - \r \right) \t^{|S|}$ for all $S
\subseteq [n]$. Moreover,
\[
\M_n(w) = Z\Diag(u) Z^{\top},
\]
where $u[S|_1] := (n- |S| - \r) \t^{|S|}(1-\t)^{n-|S|}$ for all $S \subseteq [n]$.
\item
If $\r \in (0,1)$ and
\begin{equation}\label{alphabound}
\r \leq \frac{ (n+1) \t (1-\t)^n}{2 - \left[(n-1)\t+2\right](1-\t)^n},
\end{equation}
then $\M_{n-1}(w) \succ 0$.
\end{enumerate}
\end{lem}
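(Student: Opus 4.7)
The plan is to handle the three parts in sequence, with Lemma~\ref{zetau} providing the key factorization throughout.

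For part~(i), I apply Lemma~\ref{zetau} directly to $y$, obtaining $\M_n(y) = Z \Diag(u') Z^{\top}$ with $u'[S|_1] = \sum_{T \supseteq S}(-1)^{|T \sm S|}\t^{|T|}$. Evaluating this alternating binomial sum yields $u'[S|_1] = \t^{|S|}(1-\t)^{n-|S|} \geq 0$ (since $\t \in (0,1)$), so $\M_n(y) \succeq 0$. For $i < n$, $\M_i(y)$ is a principal submatrix of $\M_n(y)$, hence also positive semidefinite.

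For part~(ii), the first identity is a direct computation: using $\M_n(y)[S|_1, T|_1] = \t^{|S \cup T|}$ and splitting $\sum_{j=1}^n \t^{|S \cup T \cup \set{j}|}$ according to whether $j \in S \cup T$ gives $|S \cup T|\t^{|S\cup T|} + (n-|S\cup T|)\t^{|S\cup T|+1}$. Substituting back collapses the expression to $\t^{|S \cup T|}[(n-|S \cup T|)(1-\t) - \r] = w[(S \cup T)|_1]$, which equals $\M_n(w)[S|_1, T|_1]$ by definition. For the factorization of $\M_n(w)$, I apply Lemma~\ref{zetau} to $w$; the resulting alternating binomial sum is evaluated using $\sum_k \binom{m}{k}(-\t)^k = (1-\t)^m$ together with the derivative identity $\sum_k k\binom{m}{k}(-\t)^k = -\t m(1-\t)^{m-1}$, producing the claimed formula for $u$.

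For part~(iii), the key observation is that $u[S|_1] > 0$ for every $S$ with $|S| \leq n-1$ (since $\r < 1 \leq n - |S|$), while $u[[n]|_1] = -\r \t^n < 0$. With $[n]|_1$ ordered last (so that the final column of $Z$ is the all-ones vector $\bar{e}$), the principal submatrix of $\M_n(w) = Z \Diag(u) Z^{\top}$ indexed by $\A_{n-1}^+$ decomposes as
\[
\M_{n-1}(w) = A - \r\t^n \bar{e} \bar{e}^{\top}, \quad A := Z_0 \Diag(u_0) Z_0^{\top} \succ 0,
\]
where $Z_0, u_0$ denote the restrictions to $\A_{n-1}^+$. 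The matrix determinant lemma then reduces $\M_{n-1}(w) \succ 0$ to the scalar condition $\r \t^n \bar{e}^{\top} A^{-1} \bar{e} < 1$. Computing $\bar{e}^{\top}A^{-1}\bar{e}$ via M\"obius inversion (solve $Z_0 v = \bar{e}$ to obtain $v[S|_1] = (-1)^{n-|S|+1}$, using that the full alternating sum $\sum_{k=0}^{n-|S|}\binom{n-|S|}{k}(-1)^k$ vanishes) yields $\bar{e}^{\top}A^{-1}\bar{e} = \sum_{|S| \leq n-1} 1/u[S|_1]$. Reindexing with $k = n-|S|$ rewrites the PD condition as
\[
\r \sum_{k=1}^{n} \binom{n}{k}\frac{\t^k}{(k-\r)(1-\t)^k} < 1.
\]
The main technical obstacle, and the final step, is to verify that hypothesis~(\ref{alphabound}) implies this inequality; I expect this to follow from elementary estimates such as $(k-\r)^{-1} \leq [k(1-\r)]^{-1}$ combined with binomial identities like $\sum_{k=0}^n \binom{n}{k}(\t/(1-\t))^k = (1-\t)^{-n}$, followed by careful algebraic rearrangement to match the specific form of $\r^*$.
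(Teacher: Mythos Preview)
Your approach matches the paper's in all three parts: (i) and (ii) are identical, and for (iii) both you and the paper reduce positive definiteness of $\M_{n-1}(w)$ to the scalar condition $\r\t^n \sum_{S \subsetneq [n]} 1/u[S|_1] < 1$ (the paper via congruence by $[\Diag(u_0)]^{-1/2}\bar{M}$, you via the matrix determinant lemma; these are equivalent, and your computation of $\bar e^{\top}A^{-1}\bar e$ is correct).

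The only genuine gap is the final estimate. Your suggested bound $(k-\r)^{-1} \leq [k(1-\r)]^{-1}$ leaves you with $\sum_{k=1}^{n}\frac{1}{k}\binom{n}{k}\bigl(\t/(1-\t)\bigr)^{k}$, and this sum has no elementary closed form; the binomial identity you quote does not apply because of the stray $1/k$, so ``careful algebraic rearrangement'' will not recover~\eqref{alphabound}. The paper instead uses the slightly looser bound
\[
\frac{1}{k-\r} \;\leq\; \frac{2}{(k+1)(1-\r)},
\]
which is equivalent to $(1+\r)(k-1)\geq 0$ (equality only at $k=1$, so the overall sum inequality is strict for $n\geq 2$). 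This is the right trick, because $\frac{1}{k+1}\binom{n}{k}=\frac{1}{n+1}\binom{n+1}{k+1}$ converts the sum into the binomial expansion $\sum_{m=0}^{n+1}\binom{n+1}{m}\bigl(\t/(1-\t)\bigr)^{m}=(1-\t)^{-(n+1)}$ minus its $m=0,1$ terms; simplifying then yields exactly the right-hand side of~\eqref{alphabound}.
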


\begin{proof}
To prove part (i), it suffices to show that $\M_n(y) \succeq 0$, as $\M_i(y)$ is a symmetric minor of $\M_n(y)$ for all $i < n$. By Lemma~\ref{zetau},  Since $\M_n(y) = Z \Diag(v) Z^{\top}$, where
\[
v[S|_1] = \sum_{T \supseteq S}  (-1)^{|T \sm S|} y[T|_1]  = \sum_{i= 0}^{n-|S|} \binom{n-|S|}{i} (-1)^i \t^{|S|+i} = \t^{|S|}(1-\t)^{n-|S|},
\]
which is positive for all $S \subseteq [n]$. Thus, it follows that $\M_n(y) \succeq 0$. For part (ii), we see that
\begin{eqnarray*}
&& (n-\r) \M_n(y) [S|_1,T|_1] - \sum_{i=1}^n \M_n(y) [(S \cup
\set{j})|_1, (T \cup \set{j})|_1] \\
&=& (n-\r) \t^{|S \cup T|} - \left( |S \cup T| \t^{|S \cup T|} + (n - |S \cup T|) \t^{|S \cup T| +1} \right)\\
&=& \left( (n- |S \cup T|)(1-\t) - \r \right) \t^{|S \cup T|} \\
&=& \M_n(w)[S|_1,T|_1].
\end{eqnarray*}
Also, it is not hard to check that $\sum_{T \supseteq S} (-1)^{|T \sm S|} w[S|_1] = u[S|_1]$ for all $S \subseteq [n]$, and so the last part of the claim follows from Lemma~\ref{zetau}.

Finally, for (iii), let $\bar{Z}$ and $\bar{M}$, respectively, denote the symmetric minor of $Z$ and $M$ with the row and column corresponding to $[n]|_1$ removed. We also let $u' \in \A_+^{n-1}$ denote the vector obtained from $u$ by removing the entry corresponding to $[n]|_1$. Then by Lemma~\ref{zetau},
\begin{eqnarray*}
\M_{n}(w)= Z \Diag(u) Z^{\top} &=& \begin{pmatrix} \bar{Z} & \bar{e} \\ 0 & 1 \end{pmatrix} \begin{pmatrix} \Diag(u') & 0 \\ 0 & -\t^{n}\r \end{pmatrix}\begin{pmatrix} \bar{Z}^{\top} & 0 \\ \bar{e}^{\top} & 1 \end{pmatrix} \\
&=& \begin{pmatrix} \bar{Z} \Diag(u') \bar{Z}^{\top} -\t^{n}\r \bar{e}\bar{e}^{\top} & -\t^{n}\r \bar{e} \\ -\t^{n}\r \bar{e}^{\top} & -\t^{n}\r \end{pmatrix}.
\end{eqnarray*}
Since $\M_{n-1}(w)$ is the symmetric minor of $\M_{n}(w)$ with the last row and column removed, we obtain that $\M_{n-1}(w) = \bar{Z} \Diag(u') \bar{Z}^{\top} - \t^{n} \r \bar{e}\bar{e}^{\top}$. Notice that $u[S|_1] =(n - |S| - \r) \t^{|S|}(1-\t)^{n-|S|} >0$ for all $S \subset [n]$, and that $\bar{M}$ is nonsingular ($\bar{M}$ is the inverse of $\bar{Z}$). Hence, $\left[\Diag(u')\right]^{-1/2}\bar{M}$ is nonsingular and
$\left[\Diag(u')\right]^{-1/2}\bar{M} \cdot \bar{M}^{\top}\left[\Diag(u')\right]^{-1/2}$ is an automorphism of the underlying cone of positive semidefinite matrices.
Therefore, $\M_{n-1}(w) \succ 0$ if and only if 
\[
Y := \left[\Diag(u')\right]^{-1/2}\bar{M} \M_{n-1}(w) \bar{M}^{\top}\left[\Diag(u')\right]^{-1/2}
\]
is positive definite. Now observe that $Y = I -\rho \t^n \xi \xi^{\top}$,
where $\xi := \left[\Diag(u')\right]^{-1/2}\bar{M}\bar{e}$.  Hence,
\begin{equation}
\label{eqn:momentpd}
\M_{n-1}(w) \succ 0 \iff \r\t^n \xi^{\top}\xi < 1.
\end{equation}
Next, using the fact that $(\bar{M}\bar{e})[S|_1] = (-1)^{n-|S|-1}$ for all $S \subset [n]$, we analyze $\r \t^n \xi^{\top}\xi$ which is equal to:
\begin{eqnarray*}
\r \t^{n} \left( \sum_{S \subset [n]} \frac{1}{u[S|_1} \right)  &=&  \r \t^{n} \left( \sum_{S \subset [n]} \frac{1}{(n-|S|- \r) \t^{|S|}(1-\t)^{n-|S|}} \right) \\
 &=&  \frac{\r \t^n}{(1-\t)^n}  \left( \sum_{i=0}^{n-1}\frac{1}{n-i-\r}\binom{n}{i} \left( \frac{1-\t}{\t} \right)^{i} \right) \\
 &< &  \frac{\r \t^n}{(1-\t)^n}  \left( \sum_{i=0}^{n-1}\frac{2}{(n+1)(1-\r)}\binom{n+1}{i} \left( \frac{1-\t}{\t} \right)^{i} \right) \\
 &= &  \frac{2 \r \t^n }{(1-\r)(n+1)(1-\t)^n} \left( \left(\frac{1}{\t} \right)^{n+1}  - (n+1) \left( \frac{1-\t}{\t} \right)^n - \left( \frac{1-\t}{\t} \right)^{n+1} \right) \\
&=& \frac{\r}{1-\r} \left( \frac{2 \left[1 - (n\t+1)(1-\t)^n \right]}{(n+1) \t (1-\t)^n}\right).
\end{eqnarray*}
Thus, if $\r \leq \frac{ (n+1) \t (1-\t)^n}{2 -\left[(n-1)\t+2\right](1-\t)^n }$, then $\M_{n-1}(w)$ is positive definite.
\end{proof}

We are now ready to prove Theorem~\ref{LasPalpha}.

\begin{proof}[Proof of Theorem~\ref{LasPalpha}]
It is obvious that $\left( P_{n,\r} \right)_I = P_{n,1}$ for all $\r \in (0,1)$. Now suppose we are given integer $n \geq 2$ and $0 <  \r \leq \frac{n^2-1}{2 n^{n+1} - n^2 -1}$. We prove our claim by showing that  there exists $\t > \frac{n-1}{n}$ where $\t \bar{e} \in \Las^{n-1}\left(P_{n,\r}\right)$.

Define $y \in \mR^{\A_n^+}$ where $y[S|_1] := \t^{|S|}$ for all $S \subseteq [n]$, then Lemma~\ref{thetapower} implies $Y := \M_n(y) \succeq 0$. It also implies that $Y^1= \M_{n-1}(w)$ where $w|S|_1] = ((n-|S|)(1-\t) -\r) \t^k$.
To prove our claim, it suffices to show that there exists $\t > \frac{n-1}{n}$ such that $\M_{n-1}(w) \succeq 0$. Since our upper bound on $\r$ is continuous in $\t$ in a neighbourhood of $\t=\frac{n-1}{n}$, and the
cone of positive definite matrices is the interior of the cone of positive semidefinite matrices, by \eqref{eqn:momentpd}, it suffices to show that $\M_{n-1}(w) \succ 0$ when $\t = \frac{n-1}{n}$. Then by letting $\t = \frac{n-1}{n}$ in~\eqref{alphabound} and simplifying, we obtain that $\r \leq \frac{n^2-1}{2 n^{n+1} - n^2 - 1}$ guarantees $\M_{n-1}(w) \succ 0$, and the claim follows.
\end{proof}

Let $p(n)$ denote the largest $\r >0$ where $\M_{n-1}(y) \in \widehat{\Las}^{n-1}\left(P_{n,\r}\right)$ for some $\theta > \frac{n-1}{n}$ (where $y$ is defined in the proof of Theorem~\ref{LasPalpha}). Figure~\ref{figPalpha} shows the value of $\log_n(p(n))$ for some small values of $n$, as well as the lower bound on $p(n)$ given by Theorem~\ref{LasPalpha}.

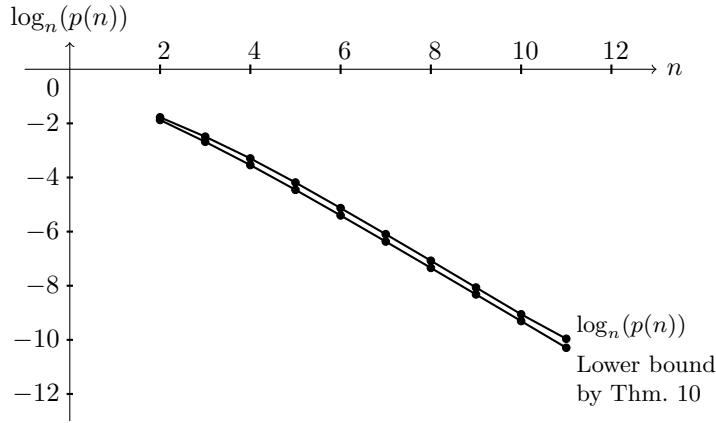
\begin{figure}[htb]
\begin{center}

\begin{tikzpicture}[scale = 0.6, xscale=1,yscale=0.6, font=\small,  word node/.style={font=\footnotesize}]]

\def\xlb{0};
\def\xub{12};
\def\ylb{-12};
\def\yub{0};
\def\buf{1};

\draw [->] (\xlb - \buf,0) -- (\xub + \buf,0);
\draw [->] (0, \ylb - \buf) -- (0, \yub + \buf);

\foreach \x in {\xlb ,2, ...,\xub}
{
    \ifthenelse{\NOT 0 = \x}{\draw[thick](\x ,-5pt) -- (\x ,5pt);}{}
    \ifthenelse{\NOT 0 = \x}{\node[anchor=south] at (\x,0) (label) {{ $\x$}};}{}
}

\foreach \y in {\ylb ,-10, ...,\yub}
{
   \ifthenelse{\NOT 0 = \y}{\draw[thick](-2pt, \y) -- (2pt, \y);}{}
 \ifthenelse{\NOT 0 = \y}{\node[anchor=east] at (0,\y) (label) {{ $\y$}};}{}

}

\node[anchor=north east] at (0,0) (label3) { $0$};

\node[anchor=west] at (\xub + \buf,0) (label3) {$n$};
\node[anchor=south] at (0, \yub+\buf) (label3) {$\log_n(p(n))$};

%

\draw[thick]
(    2.0000 ,  -1.8745)--(
    3.0000  , -2.6801)--(
    4.0000  , -3.5405)--(
    5.0000  , -4.4555)--(
    6.0000  , -5.4025)--(
    7.0000  , -6.3668)--(
    8.0000  , -7.3409)--(
    9.0000  , -8.3211)--(
  10.0000  , -9.3054)--(
   11.0000 , -10.2925);

\ignore{
   2.0000    0.5454
    3.0000    0.4737
    4.0000    0.4727
    5.0000    0.4804
    6.0000    0.4862
    7.0000    0.4898
    8.0000    0.4922
    9.0000    0.4938
   10.0000    0.4950
   11.0000    0.4959

 2.0000    0.5844
    3.0000    0.5837
    4.0000    0.6643
    5.0000    0.7425
    6.0000    0.7922
    7.0000    0.8312
    8.0000    0.8556
    9.0000    0.8669
   10.0000    0.8810
   11.0000    1.1007

}

\draw[thick]
(2, -1.775)--(3, -2.49)--(4, -3.295)--(5, -4.185)--(6, -5.13)--(7, -6.095)--(8, -7.075)--(9, -8.065)--(10, -9.055)--(11,-9.96);

\def\y{0.7};

\foreach \position in {(    2.0000 ,  -1.8745),(
    3.0000  , -2.6801),(
    4.0000  , -3.5405),(
    5.0000  , -4.4555),(
    6.0000  , -5.4025),(
    7.0000  , -6.3668),(
    8.0000  , -7.3409),(
    9.0000  , -8.3211),(
  10.0000  , -9.3054),(
   11.0000 , -10.2925),
(2, -1.775),
(3, -2.49),
(4, -3.295),
(5, -4.185),
(6, -5.13),
(7, -6.095),
(8, -7.075),
(9, -8.065),
(10, -9.055),
(11,-9.96)}
{
\node[draw, circle, inner sep=0pt, minimum size = 0.1cm, fill] at \position {};
}

\node[word node, align=left, right] at (   11.0000,  -9.5) {$\log_n(p(n))$};
\node[word node, align=left, right] at (   11.0000,  -11.5) {Lower bound\\ by Thm.~\ref{LasPalpha}};

\end{tikzpicture}
\caption{Computational results and lower bounds for $p(n)$.}\label{figPalpha}
\end{center}
\end{figure}

\subsection{The cropped hypercube $Q_{n,\r}$}

Next, we turn our attention to the cropped hypercube
\[
Q_{n,\r} := \set{x \in [0,1]^n : \sum_{i \in S}(1-x_i) + \sum_{i \not\in S}  x_i \geq \r,~\forall S \subseteq [n]}.
\]
Observe that, for every $S \subseteq [n]$, its incidence vector violates the inequality corresponding to $S$ in the description of $Q_{n,\r}$. Thus, we see that $\left( Q_{n,\r} \right)_I = \es$. Independently, Cook and Dash~\cite{CookD01a} and Goemans and the second author~\cite{GoemansT01a} showed that $Q_{n, 1/2}$ has $\LS_+$-rank $n$. Subsequently, the authors showed in~\cite{AuT16a} that the $\SA_+$-rank of $Q_{n,1/2}$ is also $n$. In fact, the results therein readily imply that $\SA_+^k \left( Q_{n, \r} \right)= Q_{n, \r - k/2}$ for all $\r \in (0,1/2]$ and $k \in [n]$. Thus, it follows that $Q_{n,\r}$ has $\SA_+$-rank $n$ for all $\r \in (0, 1/2]$.

\ignore{
\begin{cor}\label{SA+rankneg}
For every $n \geq 2$ and $\r \in \left(0,\frac{1}{2} \right]$, $\SA_{+}^k\left(Q_{n,\r}\right) = Q_{n, (k+\r)/2 }$ for every $k \in [n]$.  In particular, the $\SA_{+}$-rank of $Q_{n,\r}$ is $n$.
\end{cor}

\begin{proof}
Notice that, for all $x \in Q_{n,\r}$ and for all $I,J, I',J' \subseteq [n]$ such that $I \cup J = I' \cup J'$ and $|I| + |J| = k$, $x^I_J \in Q_{n,\r} \iff x^{I'}_{J'} \in Q_{n,\r}$. Thus, it follows from Lemma~\ref{SA_+} that
\[
\SA_+^k(P) \supseteq \bigcap_{I \subseteq [n], |I| = k} \set{x :  x^{I}_{\es} \in P} = Q_{n, (k+\r)/2}.
\]
In fact, the reverse containment holds as well, since it is well known that $\LS_0^k\left(Q_{n,\r}\right) = Q_{n, (k+\r)/2}$, where $\LS_0^k$ is a polyhedral lift-and-project operator defined in~\cite{LovaszS91a} that is dominated by $\SA_+^k$ in general.

Since $\left( Q_{n,\r} \right)_I = \es$, and $\frac{1}{2}\bar{e} \in Q_{n, (k+\r)/2} $ for all $k \leq n-1$, it follows that the $\SA_+$-rank of $Q_{n,\r}$ is $n$.
\end{proof}
}

As for the $\Las$-rank of $Q_{n, 1/2}$, it is shown to be $1$ for $n=2$ in~\cite{Laurent03a}, and $2$ for $n=4$ in~\cite{Cheung07a}. While $\Las$ depends on the algebraic description of the initial relaxation, the following observation significantly simplifies the analysis of the $\Las$-rank of $Q_{n,\r}$.

\begin{prop}\label{LasQalpha}
Suppose $n,k$ are fixed positive integers and $\r \in (0,1)$. Define the vector $w \in \mR^{\A_n^+}$ where
\[
w[S|_1] := (n-|S|-2\r) 2^{-|S|-1},~\forall S \subseteq [n].
\]
Then $\Las^k\left(Q_{n,\r}\right) \neq \es$ if and only if $\M_k(w) \succeq 0$.
\end{prop}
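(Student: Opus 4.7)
The plan is to use the rich automorphism structure of $Q_{n,\r}$ to reduce the claim to a single positive semidefiniteness check. Let $G$ denote the automorphism group of the unit hypercube, generated by coordinate permutations and single-coordinate flips. Then $Q_{n,\r}$ is $G$-invariant, and $G$ acts transitively on its $2^n$ defining inequalities, since a suitable composition of coordinate flips sends the $[n]$-constraint $\sum_i x_i \leq n-\r$ to the $S$-constraint for every $S \subseteq [n]$. By Proposition~\ref{Lassym}, $\Las^k$ commutes with each $L \in G$, so $\Las^k(Q_{n,\r})$ is convex and $G$-invariant; averaging any of its points over the coordinate-flip subgroup yields $\tfrac{1}{2}\bar{e}$, and therefore
\[
\Las^k(Q_{n,\r}) \neq \es \iff \tfrac{1}{2}\bar{e} \in \Las^k(Q_{n,\r}).
\]

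Next, I would show that any certificate for $\tfrac{1}{2}\bar{e}$ can, by $G$-averaging using the matrix action from the proof of Proposition~\ref{Lassym}, be taken to be $G$-invariant, and that the only such $G$-invariant candidate is
\[
Y^\st := \M_{k+1}(y^\st), \qquad y^\st[S|_1] := 2^{-|S|}.
\]
Indeed, ($\Las 3$) forces $Y = \M_{k+1}(y)$; permutation invariance makes $y[S|_1]$ depend only on $|S|$; and flip invariance, translated to the pseudo-moment level (flipping coordinate $j \in S$ sends $y[S|_1]$ to $y[(S\setminus\{j\})|_1] - y[S|_1]$), yields the recursion $y_{k-1} = 2 y_k$, which together with $y_0 = 1$ gives $y_k = 2^{-k}$.

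It then remains to identify exactly when $Y^\st \in \widehat{\Las}^k(Q_{n,\r})$. Conditions ($\Las 1$) and ($\Las 3$) are immediate, and $Y^\st \succeq 0$ follows from Lemma~\ref{thetapower}(i) with $\t = \tfrac{1}{2}$. Condition ($\Las 2$) requires $(Y^\st)^S \succeq 0$ for every $S \subseteq [n]$. But $Y^\st$ is $G$-invariant and $G$ acts transitively on the constraints, so applying the argument in the proof of Proposition~\ref{Lassym} to the flip $L$ taking the $[n]$-constraint to the $S$-constraint exhibits $(Y^\st)^S$ as a symmetric minor of $(U^{(k)})^{\top}(Y^\st)^{[n]} U^{(k)}$ (so $(Y^\st)^{[n]} \succeq 0$ implies $(Y^\st)^S \succeq 0$); running the same argument with $L$ in the opposite direction yields the converse. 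Hence it suffices to check $(Y^\st)^{[n]} \succeq 0$. Since the $[n]$-constraint of $Q_{n,\r}$ coincides with the defining inequality of the chipped hypercube $P_{n,\r}$, Lemma~\ref{thetapower}(ii) with $\t = \tfrac{1}{2}$ identifies $(Y^\st)^{[n]} = \M_k(w)$ with $w$ exactly as in the statement, completing the proof.

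I expect the main technical obstacle to be making the $G$-invariance step fully rigorous: verifying that $G$-averaging at the matrix level preserves membership in $\widehat{\Las}^k(Q_{n,\r})$, and that the unique $G$-fixed moment matrix with $y_0 = 1$ is indeed $Y^\st$. Once this is in hand, the rest is a short verification powered entirely by Lemma~\ref{thetapower}.
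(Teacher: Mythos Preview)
Your proposal is correct and follows essentially the same route as the paper: average over the hypercube automorphism group (via the matrix action of Proposition~\ref{Lassym}) to reduce to the single $G$-invariant certificate $Y^{\st} = \M_{k+1}(y^{\st})$ with $y^{\st}[S|_1] = 2^{-|S|}$, and then invoke Lemma~\ref{thetapower} at $\t = \tfrac12$ to identify the localizing matrix for the $[n]$-constraint as $\M_k(w)$. Your handling of $(\Las 2)$---showing via the flip action that each $(Y^{\st})^S$ is congruent to $(Y^{\st})^{[n]}$ and hence equi-PSD---is in fact more careful than the paper, which asserts that ``$\bar{Y}^j$ is the same for all $2^n$ inequalities'' even though these matrices are not literally equal (for instance the $\es$- and $[n]$-constraints yield $2^{-m-1}(n+m-2\r)$ versus $2^{-m-1}(n-m-2\r)$).
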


\begin{proof}
Suppose  $\Las^k\left(Q_{n,\r}\right) \neq \es$, and let $Y \in \widehat{\Las}^k\left(Q_{n,\r}\right)$. Notice that every automorphism for the unit hypercube is also an automorphism for $Q_{n,\r}$. If we take these $2^n n!$ automorphisms and apply them onto $Y$ as outlined in the proof of Proposition~\ref{Lassym}, we obtain $2^n n!$ matrices in $\widehat{\Las}^k\left(Q_{n,\r}\right)$. Let $\bar{Y}$ be the average of these matrices. Then by the symmetry of $Q_{n,\r}$, we know that $\bar{Y} = \M_k(y)$, where $y[S|_1] = 2^{-|S|},~\forall S \subseteq [n]$.

By the convexity of $\widehat{\Las}^k\left(Q_{n,\r}\right)$, $\bar{Y} \in \widehat{\Las}^k\left(Q_{n,\r}\right)$, and thus satisfies ($\Las 2$) for all of the $2^n$ equalities defining $Q_{n,\r}$. In fact, due to the entries of $\bar{Y}$, the matrix $\bar{Y}^j$ is the same for all $2^n$ inequalities describing $Q_{n,\r}$. Thus, using the inequality $\sum_{i=1}^n x_i \leq n-\r$ and applying Lemma~
\ref{thetapower} with $\t = \frac{1}{2}$, we obtain that
\[
\bar{Y}^j[S|_1, T|_1] =  (n- |S \cup T| - 2\r) 2^{-|S\cup T|-1} = \M_k(w)[S|_1,T|_1]
\]
for all $S, T \subseteq [n], |S|, |T| \leq k$. Hence, we deduce that $\Las^k\left(Q_{n,\r}\right) \neq \es \Rightarrow \M_k(w) \succeq 0$.

The converse can be proven by tracing the above argument backwards. First, it follows from Lemma~\ref{thetapower} that $\bar{Y} \succeq 0$. Then, again, the matrix  $\bar{Y}^j$ is exactly $\M_k(w)$ for all $2^n$ inequalities describing $Q_{n,\r}$. Since $\M_k(w) \succeq 0$ by assumption, $\bar{Y} \in \widehat{\Las}^k\left(Q_{n,\r}\right)$. Thus, we obtain that $\frac{1}{2} \bar{e} \in \Las^k\left(Q_{n,\r}\right)$, and so $\Las^k\left(Q_{n,\r}\right) \neq \es$.
\end{proof}

Thus, computing the $\Las$-rank of $Q_{n,\r}$ reduces to finding the largest $k$ where the matrix $\M_k(w)$ defined in the statement of Proposition~\ref{LasQalpha} is positive semidefinite (which would then imply that the $\Las$-rank of $Q_{n,\r}$ is $k+1$). Using that, we are able to show the following:

\begin{thm}\label{LasQalpha3}
For every $n \geq 2$, let $q(n)$ be the largest $\r$ where $Q_{n,\r}$ has $\Las$-rank $n$. Then
\[
\frac{n+1}{2^{n+2} - n - 3} \leq q(n) \leq \frac{n}{2^{n+1}-2}.
\]
\end{thm}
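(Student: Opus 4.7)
The plan is to apply Proposition~\ref{LasQalpha}, which reduces the determination of the $\Las$-rank of $Q_{n,\r}$ to deciding whether $\M_{n-1}(w) \succeq 0$ for the vector $w$ defined there; since $(Q_{n,\r})_I = \es$, $Q_{n,\r}$ has $\Las$-rank exactly $n$ iff $\M_{n-1}(w) \succeq 0$. Observe that this $w$ coincides with the $w$ from Lemma~\ref{thetapower}(ii) at $\t = 1/2$, so the decomposition $\M_n(w) = Z\Diag(u)Z^{\top}$ with $u[S|_1] = (n-|S|-\r) 2^{-n}$ and the analysis culminating in~\eqref{eqn:momentpd} apply directly. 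The lower bound is then immediate from Lemma~\ref{thetapower}(iii) with $\t = 1/2$: clearing a factor of $2^{n+1}$ in the numerator and denominator of $(n+1)\t(1-\t)^n/(2-[(n-1)\t+2](1-\t)^n)$ produces the threshold $(n+1)/(2^{n+2}-n-3)$, below which $\M_{n-1}(w) \succ 0$.

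For the upper bound I want the converse of the threshold calculation. Since $Y = I - \r\t^n\,\xi\xi^{\top}$ has smallest eigenvalue $1 - \r\t^n\,\xi^{\top}\xi$, the same argument that established~\eqref{eqn:momentpd} gives $\M_{n-1}(w) \succeq 0 \iff \r\t^n\,\xi^{\top}\xi \leq 1$. With $\t = 1/2$, reindexing the sum $\xi^{\top}\xi = \sum_{S \subset [n]} 1/u'[S|_1]$ by $j := n-|S|$ yields $\xi^{\top}\xi = 2^n \sum_{j=1}^n \binom{n}{j}/(j-\r)$, so the PSD condition becomes $F_n(\r) := \r\sum_{j=1}^n \binom{n}{j}/(j-\r) \leq 1$. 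I plan to lower-bound $F_n$ via Cauchy--Schwarz applied to the sequences $\sqrt{\binom{n}{j}(j-\r)}$ and $\sqrt{\binom{n}{j}/(j-\r)}$:
\[
(2^n-1)^2 = \left(\sum_{j=1}^n \binom{n}{j}\right)^2 \leq \left(n 2^{n-1} - \r(2^n-1)\right)\sum_{j=1}^n \frac{\binom{n}{j}}{j-\r},
\]
using $\sum_{j=1}^n \binom{n}{j} = 2^n - 1$ and $\sum_{j=1}^n j\binom{n}{j} = n 2^{n-1}$.

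At the candidate threshold $\r^* := n/(2^{n+1}-2) = n/(2(2^n-1))$, direct computation shows $\r^*(2^n-1)^2 = n(2^n-1)/2 = n 2^{n-1} - \r^*(2^n-1)$, so the Cauchy--Schwarz lower bound on $F_n(\r^*)$ equals exactly $1$. For $n \geq 2$ the sequence $(j-\r^*)_{j=1}^n$ is non-constant, so the Cauchy--Schwarz inequality is strict, giving $F_n(\r^*) > 1$ and hence $\M_{n-1}(w) \not\succeq 0$. Since $F_n$ is strictly increasing on $(0,1)$, this extends to all $\r \geq \r^*$, yielding $q(n) \leq n/(2^{n+1}-2)$. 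The main obstacle is finding a sharp enough bounding technique: cruder estimates like $1/(j-\r) \geq 1/j$ only give $F_n(\r) \geq \r\sum_{k=1}^n (2^k-1)/k$, which already falls short of $1$ at $\r = \r^*$ when $n = 2$. The Cauchy--Schwarz splitting $(j-\r) = \sqrt{j-\r}\cdot\sqrt{j-\r}$ is the creative step that produces the exact matching at $\r^*$, and this matching is essentially the reason the upper bound has the particular form $n/(2^{n+1}-2)$.
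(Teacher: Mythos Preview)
Your proof is correct; the lower bound is exactly the paper's argument. For the upper bound, you take a different route. The paper exhibits an explicit test vector $x\in\mR^{\A_n^+}$ with $x[S|_1]=(-2)^{|S|}$, notes that $(Z^{\top}x)[S|_1]=(-1)^{|S|}$, and computes directly
\[
x'^{\top}\M_{n-1}(w)x' \;=\; x^{\top}\M_n(w)x \;-\; (2^n-2)\r \;=\; \tfrac{n}{2}-(2^n-1)\r,
\]
which is negative once $\r>n/(2^{n+1}-2)$. Your approach instead pushes the machinery of Lemma~\ref{thetapower}(iii) one step further: you upgrade~\eqref{eqn:momentpd} to the two-sided equivalence $\M_{n-1}(w)\succeq 0 \iff \r\t^n\xi^{\top}\xi\le 1$, evaluate $\xi^{\top}\xi$ at $\t=1/2$ to obtain the exact threshold function $F_n(\r)=\r\sum_{j=1}^n\binom{n}{j}/(j-\r)$, and then lower-bound this sum via Cauchy--Schwarz. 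The paper's route is shorter and entirely elementary (no inequality is needed, only algebra), but its test vector has to be produced from thin air. Your route is more systematic---it stays within the eigenvalue framework already set up for the lower bound---and yields the slightly sharper conclusion $q(n)<n/(2^{n+1}-2)$, since for $n\ge 2$ the Cauchy--Schwarz step is strict. The two arguments are in fact compatible: at $\r=\r^*$ the paper's vector $x'$ lies in the kernel of $\M_{n-1}(w)$ but does not witness the negative eigenvalue your computation $F_2(\r^*)=6/5>1$ detects.
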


\begin{proof}
We first prove the lower bound. If we let $\t = \frac{1}{2}$ in~\eqref{alphabound}, we obtain that $\r \leq \frac{n+1}{2^{n+2} - n - 3}$ implies $\M_{n-1}(w) \succeq 0$ where $w[S|_1] = (n-|S|-2\r) 2^{-|S|-1},~\forall S \subseteq [n]$. Thus, the claim  follows from  Proposition~\ref{LasQalpha}.

As for the upper bound, we show that if $\r >  \frac{n}{2^{n+1}-2}$, then $Q_{n,\r}$ has $\Las$-rank at most  $n-1$. Define $x \in \mR^{\A_n^+}$ where $x[S|_1] := (-2)^{|S|}$ for all $S \subseteq [n]$. Also, let $x'$ denote the vector in $\mR^{\A_{n-1}^+}$ obtained from $x$ by removing the entry corresponding to $[n]|_1$. By Proposition~\ref{LasQalpha}, if we let  $w[S|_1] = (n- |S| - 2\r)2^{-|S|-1}$ for all $S \subseteq [n]$ and show that
 $x'^{\top} \M_{n-1}(w) x' < 0$ whenever $\r > \frac{n}{2^{n+1}-2}$, then $\M_{n-1}(w) \not\succeq 0$, and our claim follows.

Recall that $\M_{n}(w) = Z \Diag(u) Z^{\top}$ where $u[S|_1] = (n-|S|-\r) 2^{-n}$ for all $S \subseteq [n]$. Also, note that
$(Z^{\top}x)[S|_1] = (-1)^{|S|}$  for all $S \subseteq [n]$. Thus,
\begin{eqnarray*}
x^{\top} \M_{n}(w) x  &=& x^{\top} Z \Diag(u) Z^{\top} x \\
&=& \sum_{S \subseteq [n]} \left( (Z^{\top}x)[S|_1] \right)^2 u[S|_1]\\
&=& \sum_{i =0}^{n} \binom{n}{i} \left( (-1)^i \right)^2 (n - i - \r)2^{-n}\\
&=& \left( n2^n - n2^{n-1} - \r 2^n\right) 2^{-n}\\
&=& \frac{n}{2} - \r.
\end{eqnarray*}
On the other hand,
\begin{eqnarray*}
x^{\top} \M_{n}(w) x  &=& \begin{pmatrix} x'^{\top} & (-2)^n \end{pmatrix}
\begin{pmatrix} \M_{n-1}(w) & -2^{-n} \r \bar{e} \\ -2^{-n} \r \bar{e}^{\top} & -2^{-n} \r \end{pmatrix}
\begin{pmatrix} x' \\ (-2)^n \end{pmatrix}\\
&=& \begin{pmatrix} x'^{\top} \M_{n-1}(w) + (-1)^{n+1}\r \bar{e}^{\top} &  -2^{-n} \r x'^{\top}\bar{e} + (-1)^{n+1} \r \end{pmatrix}
\begin{pmatrix} x' \\ (-2)^n \end{pmatrix}\\
&=& x'^{\top} \M_{n-1}(w) x' + (-1)^{n+1}\r \bar{e}^{\top}x' + (-1)^{n+1} \r x'^{\top}\bar{e} + (-1)^{n+1}(-2)^n \r \\
&=& x'^{\top}\M_{n-1}(w) x' +  \left( 2^n-2 \right) \r.
\end{eqnarray*}
(It is helpful to observe that $\bar{e}^{\top} x' = (-1)^n - (-2)^n$.) Hence, we combine the above and obtain that
\[
x'^{\top} \M_{n-1}(w) x' = \frac{n}{2} - (2^n-1) \r,
\]
which is negative whenever $\r > \frac{n}{2^{n+1}-2}$. This finishes the proof.
\end{proof}

Therefore, akin to what Cheung showed for $P_{n,\r}$, there does not exist a fixed $\r$ where $Q_{n,\r}$ has $\Las$-rank $n$ for all $n$. Also, as with $P_{n,\r}$, the $\Las$-rank of $Q_{n,\r}$ varies under the choice of $\r$. For instance, Figure~\ref{fig74} illustrates the $\Las$-rank for $Q_{n, \l/1000}$ for $\l \in [500]$ and several values of $n$. The pattern is similar for all other values of $n$ we were able to test --- the $\Las$-rank is around $\frac{n}{2}$ when $\r = \frac{1}{2}$, and slowly rises to $n$ as $\r$ approaches $0$.  Recently, related to the Figure~\ref{fig74}, Kurpisz, Lepp\"{a}nen and Mastrolilli \cite{KurpiszLM2016} proved that the Lasserre rank of $Q_{n, 1/2}$ is between $\Omega(\sqrt{n})$ and $n - \Omega( n^{1/3} )$.

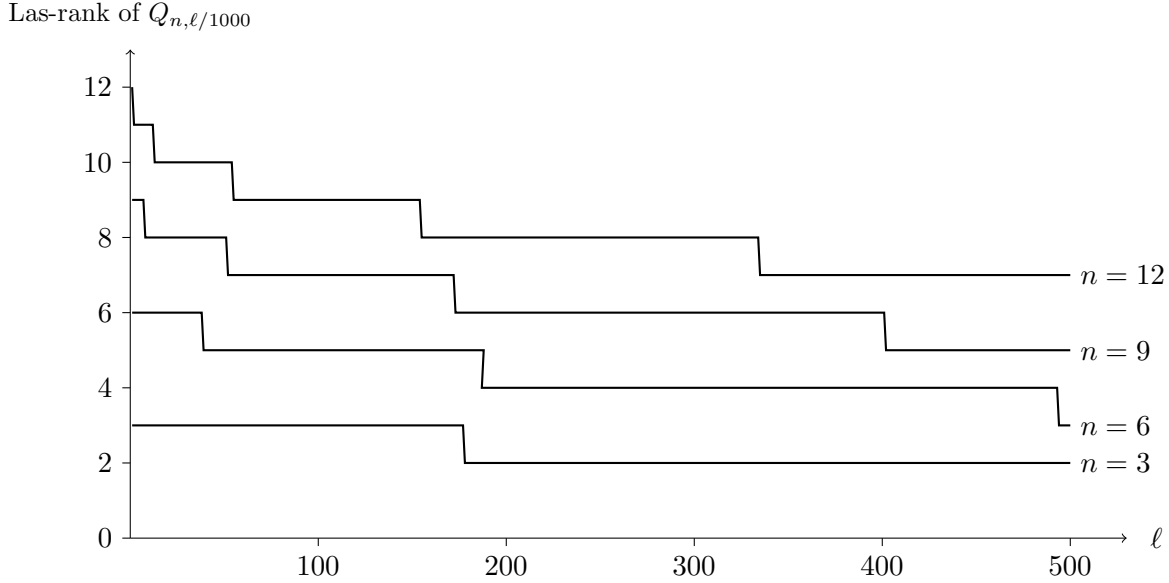
\begin{figure}[htb]
\begin{center}

\begin{tikzpicture}[y=0.5cm, x=.025cm]
	\draw[->] (0,0) -- coordinate (x axis mid) (530,0);
    	\draw[->] (0,0) -- coordinate (y axis mid) (0,13);
    	\foreach \x in {100,200,...,500}
     		\draw (\x,0) -- (\x,-3pt)
			node[anchor=north] {\x};
    	\foreach \y in {0,2,...,12}
     		\draw (0,\y) -- (-3pt,\y)
     			node[anchor=east] {\y};
	\node[right = 0.18cm] at (530,0) {$\l$};
	\node[above=0.12cm] at (0,13) {{\small $\Las$-rank of $Q_{n, \l/1000}$}};

\ignore{
\node[draw, circle, inner sep=0pt, minimum size = 0.1cm, fill] at (282,9) {};
\node[below] at (282,9) {$(282,9)$};
\node[draw, circle, inner sep=0pt, minimum size = 0.1cm, fill] at (137,10) {};
\node[below] at (137,10) {$(137,10)$};
\node[draw, circle, inner sep=0pt, minimum size = 0.1cm, fill] at (54,11) {};
\node[below] at (54,11) {$(54,11)$};
\node[draw, circle, inner sep=0pt, minimum size = 0.1cm, fill] at (16,12) {};
\node[above right] at (16,12) {$(16,12)$};
\node[draw, circle, inner sep=0pt, minimum size = 0.1cm, fill] at (3,13) {};
\node[below left] at (3,13) {$(3,13)$};
\node[draw, circle, inner sep=0pt, minimum size = 0.1cm, fill] at (1,14) {};
\node[above right] at (1,14) {$(1,14)$};
}


\node[right] at (500,7) {$n=12$};
\node[right] at (500,5) {$n=9$};
\node[right] at (500,3) {$n=6$};
\node[right] at (500,2) {$n=3$};

	\draw[thick] (1,12) -- (2,11) -- (12,11) -- (13,10) -- (54,10) -- (55,9) --  (154,9) -- (155,8) -- (334,8)-- (335,7) -- (500,7);
	\draw[thick] (1,9) -- (7,9) -- (8,8) -- (51,8) -- (52,7) -- (172,7) -- (173,6) -- (401,6)-- (402,5)-- (500,5);
	\draw[thick] (1,6) -- (38,6) -- (39,5) -- (188,5) -- (187,4) -- (493,4)-- (494,3)-- (500,3);
	\draw[thick] (1,3) -- (177,3) -- (178,2) -- (500,2);


\end{tikzpicture}
\caption{The $\Las$-rank of $Q_{n,\r}$ for varying values of $\r:=\l/1000$, for $n \in \set{3,6,9,12}$.}\label{fig74}
\end{center}
\end{figure}

Also, recall that we let $q(n)$ be the largest $\r$ where $Q_{n,\r}$ has $\Las$-rank $n$. It follows from Theorem~\ref{LasQalpha3} that $\frac{2^{n+1}}{n q(n)}$ is roughly bounded between $\frac{1}{2}$ and $1$. Note that since $\Las$ imposes as many positive semidefiniteness constraints as there are defining inequalities for the given relaxation (which there are exponentially many for $Q_{n,\r}$), the relaxation $\Las^k\left(Q_{n,\r}\right)$ is not obviously tractable, even when $k$ is a constant. Now, computing $q(n)$ requires verifying whether $\Las^{n-1}\left(Q_{n,\r}\right)$ is empty, which by definition of $\Las$ is the projection of $\widehat{\Las}^{n-1}\left(Q_{n,\r}\right)$, a set of matrices of order $\Omega(2^n) \times \Omega(2^n)$ with $\Omega(2^n)$ positive semidefiniteness constraints. Instead of solving the feasibility problem of such a large number of variables and constraints, Proposition~\ref{LasQalpha} uses the symmetries of $Q_{n,\r}$ (as well as the fact that $\Las$ preserves symmetries and commutes with all automorphisms of the unit hypercube, as shown in Proposition~\ref{Lassym}) to reduce this task to  checking the positive semidefiniteness of $\M_{n-1}(w)$,  a $(2^n-1) \times (2^n-1)$ matrix with known entries. Furthermore, notice that if $\M_{n-1}(w)$ had an eigenvector $x$ with negative eigenvalue, we could assume that $x[S|_1] = x[T|_1]$ whenever $|S| = |T|$, due to the symmetries of the entries in $\M_{n-1}(w)$. Hence, if we define the $n$-by-$n$ matrix $W$ whose rows and columns are indexed by $\set{0,1,\ldots, n-1}$ such that
\begin{eqnarray*}
W[i,j] &:=& \sum_{S,T \subseteq [n], |S|=i, |T|=j} \M_{n-1}(w)[S|_1, T|_1]\\
&=& 2^{-i-j-1} n\binom{n-1}{i}\binom{n-1}{j} \left( \sum_{k=0}^{n-1} \frac{\binom{i}{k} \binom{j}{k}}{\binom{n-1}{k}} \right)  - \r 2^{-i-j} \binom{n}{i}\binom{n}{j} \left( \sum_{k=0}^{n} \frac{\binom{i}{k} \binom{j}{k}}{\binom{n}{k}} \right),
\end{eqnarray*}
then it follows that $\M_{n-1}(w) \succeq 0$ if and only if $W \succeq 0$. This reduction allows us to verify if $Q_{n,\r}$ has $\Las$-rank $n$ by simply checking if the $n$-by-$n$ matrix $W$ is positive semidefinite. Using the reduction above, we computed $\frac{2^{n+1}}{n q(n)}$ to within two decimal places for $n \in \set{2,3,\ldots, 16}$, as illustrated in Figure~\ref{fig73}. 

\begin{figure}[htb]
\begin{center}

\begin{tikzpicture}[scale = 0.6, xscale=1,yscale=6, font=\small,  word node/.style={font=\footnotesize}]]

\def\xlb{0};
\def\xub{16};
\def\ylb{0};
\def\yub{1.4};
\def\xbuf{1};
\def\ybuf{0.05};

\filldraw[thick,lightgray]

(   2.0000  ,  1.0909)--(
    3.0000  ,  0.8205)--(
    4.0000  ,  0.7018)--(
    5.0000  ,  0.6400)--(
    6.0000  ,  0.6046)--(
    7.0000  ,  0.5828)--(
    8.0000  ,  0.5686)--(
    9.0000  ,  0.5588)--(
   10.0000  ,  0.5518)--(
   11.0000  ,  0.5464)--(
   12.0000  ,  0.5422)--(
   13.0000  ,  0.5387)--(
   14.0000  ,  0.5359)--(
   15.0000  ,  0.5334)--(
   16.0000  ,  0.5313)--(
   16.0000 ,   1.0000)--(
   16.0000 ,   1.0000)--(
   15.0000  ,  1.0000)--(
   14.0000   , 1.0001)--(
   13.0000   , 1.0001)--(
   12.0000   , 1.0002)--(
   11.0000   , 1.0005)--(
   10.0000   , 1.0010)--(
    9.0000   , 1.0020)--(
    8.0000   , 1.0039)--(
    7.0000   , 1.0079)--(
    6.0000   , 1.0159)--(
    5.0000   , 1.0323)--(
    4.0000   , 1.0667)--(
    3.0000   , 1.1429)--(
  2.0000   , 1.3333)--(
(   2.0000  ,  1.0909);

\draw [->] (\xlb - \xbuf,0) -- (\xub + \xbuf,0);
\draw [->] (0, \ylb - \ybuf) -- (0, \yub + \ybuf);

\foreach \x in {\xlb ,2, ...,\xub}
{
    \ifthenelse{\NOT 0 = \x}{\draw[thick](\x ,-0.5pt) -- (\x ,0.5pt);}{}
    \ifthenelse{\NOT 0 = \x}{\node[anchor=north] at (\x,0) (label) {{ $\x$}};}{}
}

\foreach \y in { 0.2 ,0.4,0.6,0.8, 1,1.2,1.4}
{
\draw[thick](-2pt, \y) -- (2pt, \y);
\node[anchor=east] at (0,\y) (label) {{ $\y$}};

}

\node[anchor=north east] at (0,0) (label3) { $0$};

\node[anchor=west] at (\xub + \xbuf,0) (label3) {$n$};
\node[anchor=south] at (0, \yub+\ybuf) (label3) {$\frac{2^{n+1}}{n q(n)} $};

%
\draw[thick](
   2.0000 ,   1.1700)--(
    3.0000 ,   0.9400)--(
    4.0000 ,   0.8600)--(
    5.0000 ,   0.8300)--(
    6.0000 ,   0.8300)--(
    7.0000 ,   0.8300)--(
    8.0000 ,   0.8500)--(
    9.0000 ,   0.8600)--(
   10.0000,    0.8700)--(
   11.0000,    0.8900)--(
   12.0000 ,   0.9000)--(
   13.0000  ,  0.9100)--(
   14.0000  ,  0.9100)--(
   15.0000  ,  0.9200)--(
   16.0000  ,  0.9300
);

\def\y{0.7};

\foreach \position in {(
   2.0000 ,   1.1700),(
    3.0000 ,   0.9400),(
    4.0000 ,   0.8600),(
    5.0000 ,   0.8300),(
    6.0000 ,   0.8300),(
    7.0000 ,   0.8300),(
    8.0000 ,   0.8500),(
    9.0000 ,   0.8600),(
   10.0000,    0.8700),(
   11.0000,    0.8900),(
   12.0000 ,   0.9000),(
   13.0000  ,  0.9100),(
   14.0000  ,  0.9100),(
   15.0000  ,  0.9200),(
   16.0000  ,  0.9300)
}
{
\node[draw, circle, inner sep=0pt, minimum size = 0.1cm, fill] at \position {};
}

\node[word node, align=left, right] at (16,0.9) {$\frac{2^{n+1}}{n q(n)}$};
\node[word node, align=left, above right] at (16,1) {Upper bound\\ by Thm.~\ref{LasQalpha3}};
\node[word node, align=left, right] at (16,0.53) {Lower bound\\ by Thm.~\ref{LasQalpha3}};

\end{tikzpicture}
\caption{Computational results and possible ranges for $q(n) := \min\set{\r : \Las^{n-1}\left(Q_{n,\r}\right) \neq \es}$.}\label{fig73}
\end{center}
\end{figure}

As for the $\BZ_+'$-rank of $Q_{n,\r}$, it was shown in~\cite{BienstockZ04a} that $Q_{n,1/2}$ has $\BZ$-rank $2$, where $\BZ$ is a polyhedral operator dominated by $\BZ_+$ and $\BZ_+'$. Thus, it follows that the $\BZ_+'$-rank of $Q_{n,1/2}$ is at most $2$. However, we remark that, as with the Lasserre operator, the Bienstock--Zuckerberg operators also require an explicitly given system of inequalities for the input set. In particular, the run-time of these operators depends on the size of the system (which, again, is exponential in $n$ in the case of $Q_{n,\r}$). Thus, $\BZ^k \left( Q_{n,\r} \right)$ is not obviously tractable, even for $k = O(1)$. On the other hand, operators such as $\SA_+, \SA$ and $\BCC$ are able to produce tightened relaxations that are tractable as long as we have an efficient separation oracle of the input set (which does exist for the cropped hypercube ---  note that $x \in Q_{n,\r}$ if and only if $x \in [0,1]^n$ and  $\sum_{i=1}^n | x_i - \frac{1}{2} | \leq \frac{n}{2} - \r$).

\section{Integrality gaps of lift-and-project relaxations}

We conclude this paper by noting some interesting tendencies of the integrality gaps of some lift-and-project relaxations. First, given a compact, convex set $P \subseteq [0,1]^n$ where $P_I \neq \es$ and vector $c \in \mathbb{R}^{n}$, the \emph{integrality gap} of $P$ with respect to $c$ is defined to be
\[
\gamma_c(P) := \frac{ \max\set{ c^{\top}x : x \in P}}{ \max\set{ c^{\top}x : x \in P_I}}.
\]
The integrality gap gives a measure of how ``tight'' the relaxation $P$ is in the objective function direction of $c$. Here, we show that the integrality gap of $P_{n,\r}$ with respect to the all-ones is invariant under $k$ iterations of several different operators.

.

\begin{thm}\label{SA+gap}
For every integer $n \geq 2$, for every $\r \in (0,1)$ and for every operator $\Gamma \in \set{\tilde{\LS}, \LS_+, \SA, \SA_+}$, we have
\[
\gamma_{\bar{e}}\left(\Gamma^{k} \left( P_{n,\r} \right) \right) = 1 + \frac{(n-k)(1-\r)}{(n-1)(n-k+k\r)},
\]
for every $k \in \set{0,1,2,\ldots, n}$.
\end{thm}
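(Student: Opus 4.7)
Let $\lambda := \frac{(n-k)+(k-1)\r}{n-k+k\r}$ and $M_k := n\lambda$. Direct algebra gives $M_k/(n-1) = 1 + \frac{(n-k)(1-\r)}{(n-1)(n-k+k\r)}$, and since $(P_{n,\r})_I = P_{n,1}$ has maximum $\bar{e}^\top x = n-1$, the theorem is equivalent to $\max\set{\bar{e}^\top x : x \in \Gamma^k(P_{n,\r})} = M_k$ for each of the four operators $\Gamma$. From Figure~\ref{fig0} these operators form a chain with $\SA_+$ strongest and $\tilde{\LS}$ weakest, so the sandwich $\SA_+^k(P_{n,\r}) \subseteq \Gamma^k(P_{n,\r}) \subseteq \tilde{\LS}^k(P_{n,\r})$ reduces the proof to (a) exhibiting $\lambda \bar{e} \in \SA_+^k(P_{n,\r})$ and (b) showing $\max\set{\bar{e}^\top x : x \in \tilde{\LS}^k(P_{n,\r})} \leq M_k$.

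For (a), I would adopt a moment-matrix construction. Let $\mu$ be the probability measure on $\F$ placing mass $\frac{(n-k)(1-\r)}{n-k+k\r}$ on $\bar{e}$ and mass $\frac{\r}{n-k+k\r}$ on each $\bar{e}-e_j$ (these weights are nonnegative since $\r \in (0,1)$ and $k \leq n$, and a quick check shows they sum to $1$). Define $Y \in \mS^{\A_k}$ by $Y[\a,\b] := \mu(\a \cap \b) = \mathbb{E}_{v \sim \mu}\bigl[\chi_{\a}(v)\chi_{\b}(v)\bigr]$. Then $Y \succeq 0$ automatically as a Gram matrix, and $(\SA_+1), (\SA_+3), (\SA_+4), (\SA_+5)$ follow instantly from $\chi_\a\chi_\b = \chi_{\a\cap\b}$ together with $\mu(\F)=1$. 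The substantive step is the hyperplane condition in $(\SA_+2)$: $\sum_i Y[i|_1,\a] \leq (n-\r)Y[\F,\a]$ rewrites as $\mathbb{E}[(n-\r-|v|_1)\chi_\a(v)] \geq 0$, which I would verify by cases on $\a = S|_1 \cap T|_0 \in \A_k$. For $|T|\geq 2$ the expression vanishes because no supported $v$ lies in $\a$; for $|T|=1$ only $\bar{e}-e_j$ (with $j \in T$) survives and yields $(1-\r)p_j \geq 0$; and for $T=\es$ the expression collapses to $\frac{\r(1-\r)(k-|S|)}{n-k+k\r} \geq 0$, where the hypothesis $|S|\leq k$ (equivalent to $\a \in \A_k$) is used for the first and only time. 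Reading off $\het{x}(Ye_{\F})=(1,\lambda,\dots,\lambda)^\top$ then gives $\lambda\bar{e} \in \SA_+^k(P_{n,\r})$.

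For (b), the full $\mathrm{Sym}([n])$-invariance of both $P_{n,\r}$ and the operator $\tilde{\LS}$ means that coordinate-averaging any maximizer yields another maximizer of the form $\lambda'\bar{e}$. Applying the definition of $\tilde{\LS}^k$ with $S:=[k]$ expresses $\lambda'\bar{e}$ as a convex combination of points $(\chi_T,y) \in P_{n,\r}$ with $T \subseteq [k]$, $y \in [0,1]^{n-k}$; symmetrizing further within $\mathrm{Sym}([k])\times\mathrm{Sym}([n]\sm[k])$ reduces the problem to the LP in variables $p_t \geq 0$ (total mass on $|T|=t$) and $s_t$ (common conditional mean of the free coordinates), subject to $\sum_t p_t = 1$, $s_t \in [0,\min(1,\frac{n-\r-t}{n-k})]$, and $\sum_t p_t(t/k) = \sum_t p_t s_t = \lambda'$. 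Since $\r<1$, the $s_t$-bound equals $1$ for $t<k$ and $\frac{n-\r-k}{n-k}$ for $t=k$. A standard LP-vertex argument restricts to supports on at most two $t$-values; supports $\set{t_1,t_2}$ with $t_2<k$ force $\lambda' \leq t_2/k \leq (k-1)/k$, while for supports $\set{t_1,k}$ the partial derivative of $\lambda'$ in $t_1$ is proportional to $k\r^2 > 0$, pushing $t_1 = k-1$ and producing exactly $\lambda' = \lambda$.

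The only delicate point is the $T=\es$ case of $(\SA_+2)$ in (a), which cleanly exposes why the formula depends on the level $k$; the remainder is routine, since the Gram structure trivializes PSDness and the reduction in (b) is an explicit two-variable optimization. Matching the lower and upper bounds gives $M_k = n\lambda$ for every operator in the chain, hence the advertised integrality gap.
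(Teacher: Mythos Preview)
Your proposal is correct and follows the same overall sandwich strategy as the paper: prove a lower bound by exhibiting $\lambda\bar e\in\SA_+^k(P_{n,\rho})$ and an upper bound by showing $\max\{\bar e^\top x:x\in\tilde\LS^k(P_{n,\rho})\}\le n\lambda$, then invoke the dominance chain.

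Where you genuinely differ is in part~(a). The paper defines the moment vector $y[S|_1]=1-\tfrac{|S|\rho}{n-k+k\rho}$, proves $\M_n(y)\succeq 0$ via the zeta factorization $\M_n(y)=Z\Diag(u)Z^\top$ of Lemma~\ref{zetau} (checking $u\ge 0$ by hand), then builds $Y=L\M_k(y)L^\top$ and verifies $(\SA_+2)$ by explicit entry computations. Your probability-measure framing collapses most of this: writing $Y[\alpha,\beta]=\mu(\alpha\cap\beta)$ as a Gram matrix makes PSD, nonnegativity, $(\SA_+3)$--$(\SA_+5)$ automatic, and isolates $(\SA_+2)$ as the single nontrivial check $\mathbb{E}_\mu[(n-\rho-|v|_1)\chi_\alpha(v)]\ge 0$. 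A quick computation shows your $\mu$ actually has the same moments $\mu(S|_1)=1-\tfrac{|S|\rho}{n-k+k\rho}$ as the paper's $y$, so the certificates coincide; your packaging is simply cleaner and explains conceptually why the formula depends on the level only through the constraint $|S|\le k$ in the $T=\varnothing$ case.

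Part~(b) is essentially the paper's argument recast: the paper reduces by the same double symmetrization to a two-dimensional convex-hull picture (its Figure~\ref{SA+gapfig}) and reads off the intersection with the diagonal directly from the edge between $\bigl(\tfrac{k-1}{k},1\bigr)$ and $\bigl(1,\tfrac{n-k-\rho}{n-k}\bigr)$. Your LP-vertex phrasing leads to the same two-point support; the derivative you quote is $\tfrac{\rho^2}{k(n-k)^2(a+b)^2}$ (positive, so indeed pushing $t_1\to k-1$), not literally ``proportional to $k\rho^2$,'' but the sign is what matters. Incidentally, the displayed inequality~\eqref{SA+gap1} in the paper is written with the directions reversed relative to what its own proof establishes; your statement of the two tasks has the inequalities the right way around.
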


\begin{proof}
We prove our claim by showing that
\begin{equation}\label{SA+gap1}
\max \set{ \t : \t\bar{e} \in \tilde{\LS}^k\left( P_{n,\r} \right) } \geq  \frac{n-k + (k-1)\r}{n-k+k\r} \geq \max \set{ \t : \t\bar{e} \in \SA_+^k\left( P_{n,\r} \right) }.
\end{equation}
Then the result follows from the dominance relationships between the operators. First, the claim is obvious when $k=0$ or when $k=n$, and thus from here on we assume that $k \in [n-1]$. Let $P := P_{n,\r}$.  We first prove the first inequality in~\eqref{SA+gap1}.

Given $\t \bar{e} \in \tilde{\LS}^k(P)$, we know that there exist coefficients $a_T$ and vectors $v_T \in [0,1]^{n-k}$ for each $T \subseteq [k]$ where
\[
\t \bar{e} = \sum_{T \subseteq [k]} a_T \begin{pmatrix} \chi_T \\ v_T \end{pmatrix}.
\]
(Here, $\chi_T$ is the incidence vector of $T$ in $\set{0,1}^k$.) Note that the operator $\tilde{\LS}$ requires that the $a_T$'s be nonnegative and sum up to $1$. Also, note that $\begin{pmatrix} \chi_T \\ v_T \end{pmatrix}$ is in $P$ for all $v_T \in [0,1]^{n-k}$ whenever $|T| < k$. For $T = [k]$, the constraint $\bar{e}^{\top}x \leq n- \r$ implies that $\bar{e}^{\top} v_T \leq n-k - \r$.

Due to the symmetry of $P$, given one convex combination of $\t \bar{e}$, we could obtain many other convex combinations by applying any permutation on $[n]$ that fixes $[k]$. If we take the average of all these combinations, we would obtain a ``symmetric'' one where $a_T = a_T'$ and $v_T = v_T'$ whenever $|T| = |T'|$, and that $v_T$'s are all multiples of the all-ones vector. Thus, we may further assume that there are nonnegative real numbers $a_i, v_i, i \in \set{0,1, \ldots, k}$ where
\begin{equation}\label{SA+gap8}
\t \bar{e} = \sum_{i=0}^{k}  a_i \begin{pmatrix} \frac{i}{k} \bar{e} \\ v_i \bar{e} \end{pmatrix},
\end{equation}
such that the $a_i$'s sum to $1$, $ 0 \leq v_{i} \leq 1$ for all $i < k$, and $0 \leq v_{k} \leq \frac{n- k-\r}{n-k}$. Thus,~\eqref{SA+gap8} is equivalent to saying that the point $(\t, \t) \in \mR^2$ is a convex combination of the points in the sets $\set{ (\frac{i}{k}, v_i) : i \in \set{0, 1, \ldots, k-1}, 0 \leq v_i \leq 1}$ and $\set{(1, v_i) : 0 \leq v_i \leq \frac{n-k-\r}{n-k}}$. It is easy to see that the convex hull of these points in $\mR^2$ form the polytope illustrated in Figure~\ref{SA+gapfig}.

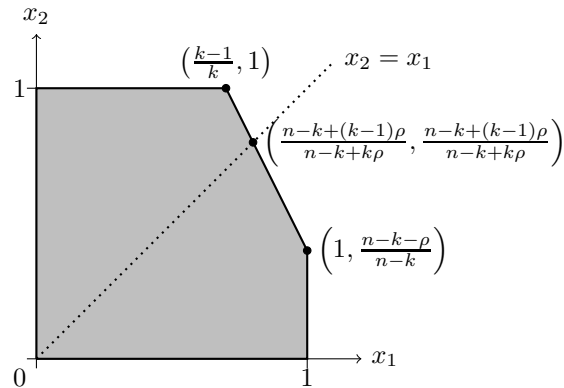
\begin{figure}[htb]
\begin{center}
\begin{tikzpicture}[scale = 1.2, y=3cm, x=3cm, font=\small]

\def\xend{0.7}
\def\yend{0.4}

	\draw[->] (0,0) -- coordinate (x axis mid) (1.2,0);
    	\draw[->] (0,0) -- coordinate (y axis mid) (0,1.2);

\foreach \x in {0,...,1}
     		\draw (\x,1pt) -- (\x,-3pt);
    	\foreach \y in {0,...,1}
     		\draw (1pt,\y) -- (-3pt,\y) ;

\node[anchor=north] at (1,0) (label2) {1};
\node[anchor=east] at (0,1) (label3) {1};
\node[anchor=west] at (1.2,0) (label2) {$x_1$};
\node[anchor=south] at (0,1.2) (label3) {$x_2$};

\node[anchor=north east] (0,0) {0};

\draw[fill = lightgray, thick] (0,0) -- (1,0) -- (1,\yend) -- (\xend,1) -- (0,1)-- (0,0);

\draw[dotted, thick] (0,0) -- (1.1,1.1);
\node[anchor=west] at (1.1,1.1) (label) {$x_2=x_1$};


\node[draw, circle, inner sep=0pt, minimum size = 0.1cm, fill] at (1,\yend) {};
\node[right] at (1, \yend) {$\left(1,\frac{n-k-\r}{n-k}\right)$};
\node[draw, circle, inner sep=0pt, minimum size = 0.1cm, fill] at (\xend,1) {};
\node[above] at (\xend,1) {$\left(\frac{k-1}{k},1\right)$};

\node[draw, circle, inner sep=0pt, minimum size = 0.1cm, fill] at ( {  (1 - \xend * \yend)/(2 - \xend - \yend) }, {  (1 - \xend * \yend)/(2 - \xend - \yend) }) {};
\node[right] at ({  (1 - \xend * \yend)/(2 - \xend - \yend) }, {  (1 - \xend * \yend)/(2 - \xend - \yend) }) {$\left( \frac{n-k+(k-1)\r}{n-k+k\r},\frac{n-k+(k-1)\r}{n-k+k\r}\right)$};

\end{tikzpicture}
\end{center}
\caption{Reduction of finding $\max\set{\t : \t\bar{e} \in \tilde{\LS}^k\left( P_{n,\r} \right)}$ to two dimensions.}\label{SA+gapfig}
\end{figure}

Then it is easy to see that the largest $\t$ where $(\t,\t)$ is contained in the convex hull is obtained by the convex combination
\[
\frac{n-k}{n-k+k\r} \begin{pmatrix} 1 \\ \frac{n-k-\r}{n-k} \end{pmatrix} + \frac{k\r}{n-k+k\r} \begin{pmatrix} \frac{k-1}{k} \\ 1 \end{pmatrix}  = \begin{pmatrix} \frac{n-k+(k-1)\r}{n-k+k\r} \\ \frac{n-k+(k-1)\r}{n-k+k\r} \end{pmatrix}.
\]
This establishes the upper bound on $\t$.

Next, we turn to show the second inequality in~\eqref{SA+gap1} by proving that $\frac{n-k+(k-1)\r}{n-k+k\r} \bar{e} \in \SA_+^k(P)$ for every $k$. First, define $y \in \A_n^+$ where $y[S|_1] := 1 - \frac{|S|\r}{n-k+k\r}$ for every $S \subseteq [n]$. We first show that $\M_n(y) \succeq 0$.
By Lemma~\ref{zetau}, we know that $\M_n(y) = Z \Diag(u) Z^{\top}$ where $u$ is the vector with entries
\begin{eqnarray*}
u[S|_1] &=& \sum_{T \supseteq S} (-1)^{|T\sm S|} y[S|_1] = \sum_{j=0}^{n-|S|} \binom{n-|S|}{j} (-1)^{j} \left(1 - \frac{ (|S|+j)\r}{n-k+k\r} \right)\\
&=&  \left\{
\begin{array}{ll}
0 & \tn{if $|S| \leq n-2$;}\\
\frac{\r}{n-k+k\r} & \tn{if $|S| = n-1$;}\\
1 - \frac{n\r}{n-k+k\r} & \tn{if $S = [n]$.}
\end{array}
\right.
\end{eqnarray*}
Note that $1 - \frac{n\r}{n-k+k\r} \geq 0 \iff (n-k)(\r-1) \leq 0$, which does hold as $n \geq k$ and $\r < 1$. Hence, since $u \geq 0$, we deduce that $\M_n(y) \succeq 0$, and in particular $\M_k(y) \succeq 0$.

Next, define $L \in \mR^{\A_{k} \times \A_{k}^+}$ where
\[
L[S|_1 \cap T|_0, U|_1] := \left\{
\begin{array}{ll}
(-1)^{|S|}& \tn{if $S \cup T = U$;}\\
0 & \tn{otherwise,}
\end{array}
\right.
\]
and let $Y := L \M_k(y) L^{\top}$. We claim that $Y \in \widehat{\SA}_+^{k}(P)$. First, $\M_k(y) \succeq 0$ implies that $Y \succeq 0$. Also, ($\SA_+ 1$) holds as $Y[\F,\F] = 1$, and it is not hard to see that $Y \geq 0$, as every entry in $Y$ is either $0, \frac{\r}{ n - k +k\r}$ or $1 -  \frac{i\r}{ n -k+k\r}$ for some integer $i \in \set{0,\ldots, n}$. Next, we check that $\het{x}(Y e_{\b}) \in K(P)$ for all $\b \in \A_{k}$. Given $\b = S|_1 \cap T|_0, \het{x}(Y e_{\b})$ is the zero vector whenever $|T| \geq 2$, and is the vector $\frac{\r}{n\r + 1 - \r} (\bar{e} - e_i)$ whenever $T = \set{i}$ for some $i \in [n]$. In both cases, $\sum_{i=1}^n Y[i|_1, \b] \leq (n-\r) Y[\F,\b]$ easily holds.

Finally, suppose $\b = S|_1$ for some $S \subseteq [n]$ where $|S| \leq k$. Observe that
\[
Y[\a, S|_1] = \left\{
\begin{array}{ll}
\frac{n-k+(k-|S|)\r}{n-k+k\r} & \tn{if $\a=\F$ or $\a =i|_1$ where $i \in S$;}\\
 \frac{n-k-(k-|S|-1)\r}{n-k+k\r} & \tn{otherwise.}
\end{array}
\right.
\]
Now
\begin{eqnarray*}
\sum_{i=1}^n Y[i|_1, S|_1]  &=&  |S| \left( \frac{n-k+(k-|S|)\r}{n-k+k\r}\right) +  (n-|S|) \left( \frac{n-k+(k-|S|-1)\r}{n-k+k\r}\right)\\
&\leq&  (n-\r)\left(  \frac{n-k+(k-|S|)\r}{n-k+k\r}\right)\\
&=& (n-\r) Y[\F,S|_1].
\end{eqnarray*}
Thus, $\het{x}(Ye_{\b}) \in K(P)$ in this case as well. Finally, it is not hard to see that the entries of $Y$ satisfy ($\SA_+ 3$), ($\SA_+4$) and ($\SA_+ 5$). This completes our proof.
\end{proof}

\begin{figure}[htb]
\begin{center}

\begin{tikzpicture}[scale =0.7, xscale=1.5,yscale=0.7,  font=\small, word node/.style={font=\small}]]

\def\n{10};

\def\xlb{0};
\def\xub{\n};
\def\ylb{0};
\def\yub{13};
\def\xbuf{0.5};
\def\ybuf{1};

\draw [->] (0,0) -- (\xub + \xbuf,0);
\draw [->] (0,0) -- (0,0.2) -- (0.2,0.3) -- (-0.2, 0.4) -- (0,0.5) -- (0,\yub + \ybuf);

\foreach \x in {\xlb ,1, ...,\xub}
{
    \ifthenelse{\NOT 0 = \x}{\draw[thick](\x ,-0.5pt) -- (\x ,0.5pt);}{}
    \ifthenelse{\NOT 0 = \x}{\node[anchor=north] at (\x,0) (label) {{ $\x$}};}{}
}

\foreach \y in {1,1.02, 1.04, 1.06, 1.08,1.1,1.12}
{
\draw[thick](-2pt, { 1+ 100*(\y-1)} ) -- (2pt,{ 1+ 100*(\y-1)});
\node[anchor=east] at (0,{ 1+ 100*(\y-1)}) (label) {{ $\y$}};

}

\node[anchor=north east] at (0,0) (label3) { $0$};

\node[anchor=west] at (\xub + \xbuf,0) (label3) {$k$};
\node[anchor=south] at (0, \yub+\ybuf) (label3) {$\gamma_{\bar{e}}\left( \SA_+^k \left(P_{10,\r} \right) \right)$};

%

\node[anchor = west] at (5,12.5) {$\r = 0.01$};
\node[anchor = west] at (5,10.7) {$\r = 0.1$};
\node[anchor = west] at (5,5.2) {$\r = 0.5$};
\node[anchor = west] at (5,2.2) {$\r = 0.9$};

\def\alpha{0.01}
\draw[thick, domain= 0 : {\n}, samples = 11] plot (\x, {1 + 100*(\n - \x) * ( 1-\alpha) / (\n - 1) / (\n - \x + \x * \alpha)});

\foreach \x in {\xlb ,1, ...,\xub}
{
\node[draw, circle, inner sep=0pt, minimum size = 0.15cm, fill] at ({\x}, {1 + 100*(\n - \x) * ( 1-\alpha) / (\n - 1) / (\n - \x + \x * \alpha)})  {};
}

\def\alpha{0.1}
\draw[thick, domain= 0 : {\n}, samples = 11] plot (\x, {1 + 100*(\n - \x) * ( 1-\alpha) / (\n - 1) / (\n - \x + \x * \alpha)});

\foreach \x in {\xlb ,1, ...,\xub}
{
\node[draw, circle, inner sep=0pt, minimum size = 0.15cm, fill] at ({\x}, {1 + 100*(\n - \x) * ( 1-\alpha) / (\n - 1) / (\n - \x + \x * \alpha)})  {};
}

\def\alpha{0.5}
\draw[thick, domain= 0 : {\n}, samples = 11] plot (\x, {1 + 100*(\n - \x) * ( 1-\alpha) / (\n - 1) / (\n - \x + \x * \alpha)});

\foreach \x in {\xlb ,1, ...,\xub}
{
\node[draw, circle, inner sep=0pt, minimum size = 0.15cm, fill] at ({\x}, {1 + 100*(\n - \x) * ( 1-\alpha) / (\n - 1) / (\n - \x + \x * \alpha)})  {};
}

\def\alpha{0.9}
\draw[thick, domain= 0 : {\n}, samples = 11] plot (\x, {1 + 100*(\n - \x) * ( 1-\alpha) / (\n - 1) / (\n - \x + \x * \alpha)});

\foreach \x in {\xlb ,1, ...,\xub}
{
\node[draw, circle, inner sep=0pt, minimum size = 0.15cm, fill] at ({\x}, {1 + 100*(\n - \x) * ( 1-\alpha) / (\n - 1) / (\n - \x + \x * \alpha)})  {};
}

\draw[thick, dotted] (0,1) -- ( {\xub+\xbuf}, 1);
\end{tikzpicture}
\caption{Integrality gaps of $\SA_+^k \left(P_{10,\r} \right)$ for various $k$ and $\r$.}\label{fig6}
\end{center}
\end{figure}

Figure~\ref{fig6} illustrates the integrality gaps of $\SA_+^k \left( P_{n,\r} \right)$ for various values of $k$ and $\r$ in the case $n=10$ (the behaviour is similar for other values of $n$). In general, when $\r$ is close to $1$, the gap decreases at an almost-linear rate towards $1$. On the other hand, when $\r$ is small, the integrality gap of $\SA_+^k\left(P_{n,\r}\right)$ stays relatively close to $1 + \frac{1}{n-1}$ as $k$ increases to $n-1$, and then abruptly drops to $1$ at the $n^{\tn{th}}$ iteration, where we obtain the integer hull. Again, it follows from Theorem~\ref{SA+gap} that these gaps would be identical if we replaced $\SA_+$ by any operator $\Gamma$ where $\SA_+$ dominates $\Gamma$ and $\Gamma$ dominates $\tilde{\LS}$.


We also note that Theorem~\ref{SA+gap} implies Proposition~\ref{Kndstrong}. Moreover, the techniques used for proving the first inequality in~\eqref{SA+gap1} can be extended to compute $\max\set{\t : \t \bar{e} \in \tilde{\LS}^k \left( P_{n,\r} \right)}$ for any non-integer $\r \in (0,n)$, which would imply Proposition~\ref{GammaLB}.

While the integrality gap for $Q_{n,\r}$ is undefined (as its integer hull is empty for all $\r >0$), we see a similar distinction between its $\SA_+$ and $\Las$ relaxations. Note that since all lift-and-project operators we have studied preserve containment, starting with a tighter initial relaxation might offer a lift-and-project operator a head start and yield stronger relaxations in fewer iterations. However, in the case of $Q_{n,\r}$, different lift-and-project operators utilize this head start in different ways. As mentioned earlier, we know that $\SA_+^k\left(Q_{n,\r}\right) = Q_{n, \r-k/2}$ for all $\r \in (0,1/2]$ and for all $k = [n]$. Thus, given $\r, \r'$ where $0 < \r < \r' \leq \frac{1}{2}$,
\[
\SA_+^k\left(Q_{n,\r}\right) =  Q_{n, \left(\r + k/2 \right)} \supset  Q_{n, \left(\r' + k/2 \right)} = \SA_+^k\left(Q_{n,\r'}\right),
\]
for all $k \in [n-1]$. However, they still converge to the integer hull in the same number of steps. On the other hand, as shown in Theorem~\ref{LasQalpha} and Figure~\ref{fig74}, starting with a larger $\r$ can help $\Las$ arrive at the integer hull in fewer iterations, similar to what we saw with $P_{n,\r}$.

Thus, at least in the case of $P_{n,\r}$ and $Q_{n,\r}$ where $\r \in (0,1)$, all aforementioned operators that are no stronger than $\SA_+$ perform pretty much equally poorly, while deploying $\Las$ does achieve some tangible improvements in rank (at least when $\r$ is not extremely small). Granted, since the number of inequalities imposed by most lift-and-project methods are superpolynomial in $n$ after $\Omega(\log(n))$ rounds, an operator managing to return the integer hull in, say, $\Omega(\sqrt{n})$ iterations is already exerting exponential effort. In that case, claiming that this  operator performs better than another that requires (say) $\Omega(n)$ rounds is somewhat a moot point in practice, at the time of this writing.

Of course, there do exist examples where a stronger lift-and-project operator manages to return a tractable relaxation and outperforms exponential effort by a weaker operator: We showed in Propositions~\ref{SA+PalphaUB} and~\ref{GammaLB} that when $\r = n-O(1)$, $\SA_+$ would return the integer hull in $O(1)$ iterations, while $\tilde{\LS}$ requires  $\Omega(n)$ rounds. Another such instance is the following: Given a graph $G = (V,E)$, consider its fractional stable set polytope, which is defined as
\[
\FRAC(G) = \set{ x \in [0,1]^V : x_i + x_j \leq 1,~\forall \set{i,j} \in E}.
\]
When $G$ is the complete graph on $n$ vertices, it is well known that for hierarchies of polyhedral lift-and-project relaxations (including $\SA$), the integrality gap (with respect to $\bar{e}$) starts at $\frac{n}{2}$, then gradually decreases, and reaches $1$ after $\Omega(n)$ iterations. On the other hand, it takes semidefinite operators such as $\LS_+, \SA_+$ and $\Las$ exactly one iteration to reach the stable set polytope of $K_n$, and thus the corresponding integrality gaps for these operators would dive from $\frac{n}{2}$ to $1$ in just one iteration.

This raises the natural question of whether, in general, there is some efficient way where we could diagnose a given problem and determine the ``best'' lift-and-project method for the job. One step in that direction is through studying how various methods perform on different problem classes.  Such studies would hopefully provide us better guidance on when it is worthwhile to apply an operator that is more powerful but has a higher per-iteration computational cost.

To take this point further, perhaps one could build a shape-shifting operator that adapts to the given problem in some way. Bienstock and Zuckerberg~\cite{BienstockZ04a} devised the first operators that generate different variables for different relaxations (or even different algebraic descriptions of the same relaxation). They showed that this flexibility can be very useful in attacking relaxations of some set covering problems. Thus, perhaps tight relaxations for other hard problems can be found similarly by building a lift-and-project operator with suitable adaptations.

\bibliographystyle{alpha}
\bibliography{ref}

\end{document}